\newtheorem{theorem}{Theorem}[section]
\newtheorem{lemma}[theorem]{Lemma}
\newtheorem{proposition}[theorem]{Proposition}
\newtheorem{definition}[theorem]{Definition}
\newtheorem{rmk}[theorem]{Remark}
\def\nn{\nonumber}
\def\N{{\mathbb N}}
\def\R{{\mathbb R}}
\def\la{\langle}
\def\ra{\rangle}
\def\les{\lesssim}
\def\1{{\bf 1}}
\def\eqnn{\begin{eqnarray*}}
\def\eeqnn{\end{eqnarray*}}
\def\eqn{\begin{eqnarray}}
\def\eeqn{\end{eqnarray}}
\newcommand{\nc}{\newcommand}
\nc{\be}{\begin{equation}}
\nc{\ee}{\end{equation}}
\nc{\ba}{\begin{eqnarray}}
\nc{\ea}{\end{eqnarray}}
\nc{\eps}{\epsilon}
\def\prf{\begin{proof}}
\def\endprf{\end{proof}}
\begin{document}

\title[Regularity properties of cubic NLS on $\R^+$]{Regularity properties of the cubic nonlinear Schr\"odinger equation on the half line}

\author{{\bf M.~B.~Erdo\u gan, N.~Tzirakis}\\
University of Illinois\\
Urbana-Champaign}

\thanks{Email addresses: berdogan@math.uiuc.edu (B. Erdogan), tzirakis@math.uiuc.edu (N. Tzirakis)}\thanks{ The first author was partially supported by NSF grant DMS-1501041. The second author's work was supported by a grant from the Simons Foundation (\#355523 Nikolaos Tzirakis). }

\date{}

\begin{abstract}
In this paper we study the local and global regularity properties of the cubic nonlinear Schr\"odinger equation (NLS) on the half line with rough initial data. These properties include local and global wellposedness results, local and global smoothing results and the behavior of higher order Sobolev norms of the solutions.  In particular, we prove that the nonlinear part of the cubic NLS on the half line is smoother than the initial data. The gain in regularity coincides with the gain that was observed for the periodic cubic NLS \cite{et2} and the cubic NLS on the line \cite{erin}. We also prove that in the defocusing case the norm of the solution grows at most polynomially-in-time while in the focusing case it grows exponentially-in-time. As a byproduct of our analysis we provide a different proof of an almost sharp local wellposedness in $H^s(\R^+)$.  Sharp $L^2$ local wellposedness was obtained  in \cite{holmer} and \cite{bonaetal}. 
Our methods simplify some ideas in the wellposedness theory of initial and boundary value problems that were developed in \cite{collianderkenig, holmer,holmer1,bonaetal}.
\end{abstract}

\maketitle
\section{Introduction}
We study the following initial-boundary value problem (IBVP)
\begin{align}\label{nls}
&iu_t+u_{xx}+\lambda |u|^2u=0,\,\,\,\,x\in\R^+, t\in \R^+,\\
&u(x,0)=g(x), \,\,\,\,u(0,t)=h(t). \nn
\end{align}
Here $\lambda=\pm 1$, $g\in H^s(\R^+)$ and $h\in H^{\frac{2s+1}{4}}(\R^+)$, with the additional compatibility condition $g(0)=h(0)$ for $s>\frac12$. The compatibility condition is necessary since the solutions we are interested in are    continuous space-time functions for $s>\frac12$.  

The term that models the nonlinear effects is cubic and the equation can be focusing ($\lambda=1$) or defocusing ($\lambda=-1$). Nonlinear Schr\"odinger equations (NLS) of this form model a variety of physical phenomena in optics, water wave theory and Langmuir waves in a hot plasma. In the case of the semi infinite strip $(0, \infty)\times [0,T] $, the solution $u(x,t)$ of \eqref{nls} models the amplitude of the wave generated at one end and propagating freely at the other.  For an interesting example of such a wave train in deep water waves, see \cite{ablo}.

Our intention is to study this problem by using the tools that are available to us in the case of the full line. In this case the equation is strongly dispersive, and it has been studied extensively during the last 40 years. We use the restricted norm method (also known as   the $X^{s,b}$ method) of Bourgain, \cite{bourgain, Bou2},  modified appropriately. The idea to use the restricted norm method in the case of IBVP with mild nonlinearities comes from \cite{collianderkenig}. Their paper introduced a method to solve initial-boundary value problems for nonlinear
dispersive partial differential equations by recasting these problems as initial
value problems with an appropriate forcing term. This reformulation transports the
robust theory of initial value problems to the initial-boundary value setting. The problem they considered was the Korteweg-de Vries equation on the half line. In this case to recover the derivative in the nonlinearity one has to use the cancelations of the nonlinear waves that are nicely captured by the $X^{s,b}$ method. The idea of reformulating the problem as an initial value problem with forcing was applied in the case of the NLS with a general power nonlinearity in \cite{holmer, holmer1}. The difference is that  one has to use  Strichartz estimates which are appropriate for dispersive equations with power type nonlinearities. For NLS on $\R^n$ the Strichartz estimates give sharp wellposedness results. 
One can also use more standard Laplace transform techniques to study \eqref{nls}, see e.g. \cite{bonaetal}. This is based on an explicit solution formula of the linear nonhomogeneous boundary value problem
\begin{align}\label{eq:Wb}
&iu_t+u_{xx}=0,\,\,\,\,x\in\R^+, t\in \R^+,\\
&u(x,0)=0, \,\,\,\,u(0,t)=h(t). \nn
\end{align}
which is obtain by formally using the Laplace transform. One then can use Duhamel's formula and express the nonlinear solution as a superposition of the linear evolution which incorporates the boundary term and the initial data with the nonlinearity.

We now briefly discuss a short history of the wellposedness theory for \eqref{nls}. The reader is advised to consult \cite{bonaetal} and the references therein for a more comprehensive list of works related to \eqref{nls}.
The problem on a bounded or unbounded domain $\Omega \in \R^n$ with smooth boundary and $h=0$ has been considered in \cite{brgal} and \cite{tsutsumi}. Carolle and Bu, in \cite{cbu},  considered the equation \eqref{nls} in the general case when $h$ is not identically zero. Using semigroup techniques and a priori estimates they showed the existence of a unique global solution for $g\in H^2(\R^+)$ and $h\in C^2(\R^+)$. The result was extended to the general power nonlinearity case by Bu \cite{bu} in the defocusing case. For the general domain $\Omega$ in the defocusing case Strauss and Bu in \cite{bstr} prove the existence of a global $H^1$ solution if the boundary data are smooth and compactly supported. Bu, Tsutaya, and Zhang \cite{butsuyoung} extended the above result to the focusing case in higher dimensions  for a nonlinearity of the form $|u|^{p-2}u$ and $2\leq p\leq 2+\frac{2}{n}$. For rough initial data and in the case of the half line Holmer in \cite{holmer} proved sharp local well posednesss matching the theory in the full line.  Bona, Sun, and  Zhang, \cite{bonaetal}, addressed some of the uniqueness questions that were left open in \cite{holmer}, and also studied the equation on bounded intervals. Finally, we should also mention  that in the integrable case ($p=4$) Fokas, \cite{fokas}, obtained a solution of \eqref{nls} when $g$ is a Schwartz function and $h$ is sufficiently
smooth by reformulating the problem as a $2\times 2$ matrix Riemann-Hilbert problem.

In this paper we combine the Laplace transform method \cite{bonaetal} with the  $X^{s,b}$ method \cite{bourgain} to prove that the nonlinear part of the solution is smoother than the initial data. More precisely, we prove 
\begin{theorem}\label{thm:smooth} Fix $s\in (0,\frac52)$, $s\neq \frac12, \frac32$,  $ g\in H^s(\R^+)$, and $h\in H^{\frac{2s+1}{4}}(\R^+)$, with the additional compatibility condition $g(0)=h(0)$ for $s>\frac12$. Then, for  $t$ in the local existence interval $ [0,T]$ and $a<\min(2s,\frac12,\frac52-s)$ we have
$$
u(x,t)- W_0^t(g,h)\in C^0_tH^{s+a}_x([0,T]\times \R^+),
$$
where $ W_0^t(g,h)$ is the solution of the corresponding linear equation  \eqref{nls} with  $\lambda =0$.
\end{theorem}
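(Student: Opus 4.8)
The plan is to peel off the rough data into the linear solution and show that what remains is a nonlinear Duhamel term that gains $a$ derivatives through a trilinear $X^{s,b}$ estimate, after which the half-line boundary correction must be shown to inherit the same gain. Set $v := u - W_0^t(g,h)$. By construction $v$ solves the linear IBVP with zero initial and boundary data and forcing $F := \lambda|u|^2u$, so the solution formula from the Laplace-transform/boundary-operator reformulation writes $v$ on $[0,T]\times\R^+$ as the restriction of the full-line Duhamel term $\duh(F) := i\lambda\int_0^t e^{i(t-t')\partial_x^2}\widetilde F\,dt'$ (with $\widetilde F$ a fixed extension of $F$ to $\R$) plus a boundary forcing term $\cB(p)$, where the temporal profile $p$ encodes the trace of $\duh(F)$ at $x=0$. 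It therefore suffices to show that each of these two pieces belongs to $C^0_tH^{s+a}_x$ on the local interval.

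The engine for the interior piece is the trilinear smoothing estimate
$$\big\|u_1\overline{u_2}\,u_3\big\|_{X^{s+a,\,b-1}}\lesssim \prod_{j=1}^3\|u_j\|_{X^{s,b}},\qquad b=\tfrac12+,\quad a<\min(2s,\tfrac12,\tfrac52-s),$$
which, combined with the standard mapping $X^{s+a,b-1}\to C^0_tH^{s+a}_x$ of Duhamel's operator, controls $\duh(F)$. To prove it I would pass to Fourier variables and exploit the resonance identity for the Schr\"odinger phase with $\xi=\xi_1-\xi_2+\xi_3$, namely
$$\xi^2-\xi_1^2+\xi_2^2-\xi_3^2=2(\xi-\xi_1)(\xi-\xi_3),$$
so that a large resonance is absorbed, via the modulation weights $\langle\tau+\xi^2\rangle^{b-1}$ and $\langle\tau_j+\xi_j^2\rangle^{b}$, into the gain of $a$ derivatives. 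The threshold $a<\tfrac12$ arises from $L^2$-integrability of the reciprocal resonance, while $a<2s$ arises from the need to redistribute $\langle\xi\rangle^{s+a}$ across three factors each carrying only $\langle\xi_j\rangle^{s}$; in the near-resonant region $\{|\xi-\xi_1||\xi-\xi_3|\lesssim 1\}$ two output frequencies are forced to nearly coincide, again capping the transfer, and the residual low-modulation contributions are handled by the $L^4_{t,x}$ Strichartz bound.

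The main obstacle is the boundary correction $\cB(p)$, whose regularity is dictated by the time-trace of the already-smoothed interior term. I expect to prove a sharp trace estimate showing that $\duh(F)\in C^0_tH^{s+a}_x$ implies its restriction to $x=0$ lies in $H^{\frac{2(s+a)+1}{4}}_t$ on $[0,T]$, together with the mapping property that $\cB$ sends $H^{\frac{2\sigma+1}{4}}_t$ into $C^0_tH^{\sigma}_x$; chaining these yields $\cB(p)\in C^0_tH^{s+a}_x$. Verifying that the extra spatial regularity is correctly reflected in the temporal regularity of the trace, uniformly up to $s+a$ approaching $5/2$, is the delicate point and is exactly what pins the ceiling $a<\tfrac52-s$, while the exclusions $s\neq \tfrac12,\tfrac32$ reflect the failure of the relevant Sobolev trace and extension identities at those exponents. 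Finally I would run all of these estimates inside the a priori/contraction bound furnished by local wellposedness, which controls $\|u\|_{X^{s,b}}$ on $[0,T]$, so that the trilinear and boundary bounds combine to give the stated membership $u-W_0^t(g,h)\in C^0_tH^{s+a}_x([0,T]\times\R^+)$.
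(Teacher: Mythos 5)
Your overall architecture matches the paper's: decompose $u-W_0^t(g,h)$ into the full-line Duhamel term plus a boundary correction driven by the time-trace of that Duhamel term at $x=0$, gain $a$ derivatives on the interior piece via a trilinear $X^{s,b}$ estimate built on the resonance factorization $(\xi-\xi_1)(\xi_1-\xi_2)$, and map the corrected boundary data back into $C^0_tH^{s+a}_x$ via the boundary operator. The constraints $a<2s$ and $a<\frac12$ are attributed to the right sources, and the mapping property $\cB: H^{\frac{2\sigma+1}{4}}_t\to C^0_tH^\sigma_x$ is exactly Lemma~\ref{lem:wbcont}.

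However, there is a genuine gap in your treatment of the boundary correction. You propose to deduce that the trace $q(t)=D_0\big(\int_0^t W_\R(t-t')F\,dt'\big)$ lies in $H^{\frac{2(s+a)+1}{4}}_t$ \emph{from} the already-established membership $\duh(F)\in C^0_tH^{s+a}_x$. No such implication holds: a function in $C^0_tH^{\sigma}_x$ has a pointwise trace at $x=0$ that is merely continuous in $t$, with no quantitative Sobolev-in-time gain; the exponent $\frac{2\sigma+1}{4}$ is a Kato-smoothing phenomenon tied to the dispersive phase, not an abstract trace theorem. The paper instead proves a direct estimate on the Duhamel trace (Proposition~\ref{prop:duhamelkato}), bounding $\|q\|_{H^{\frac{2\sigma+1}{4}}_t}$ by $\|F\|_{X^{\sigma,-b}}$ for $\sigma\leq\frac12$ and, crucially, by $\|F\|_{X^{\frac12,\frac{2\sigma-1-4b}{4}}}+\|F\|_{X^{\sigma,-b}}$ for $\frac12\leq\sigma\leq\frac52$. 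That extra norm with \emph{positive} modulation exponent at spatial regularity $\frac12$ requires a second family of trilinear estimates (Proposition~\ref{prop:smooth2}), and it is precisely there that the ceiling $a<\frac52-s$ arises — not from a uniformity issue in a trace estimate as you suggest. Without Proposition~\ref{prop:duhamelkato} and its companion trilinear bound, your boundary piece is unproven. A secondary issue: you state the trilinear estimate with inputs in $X^{s,\frac12+}$, but the constructed solution is only controlled in $X^{s,b}$ with $b<\frac12$, because the boundary operator $W_0^t(0,h)$ is bounded into $X^{s,b}$ only for $b\leq\frac12$ (Proposition~\ref{prop:wbh}); the estimates must therefore be proven for $\frac12-\epsilon<b<\frac12$, as the paper does.
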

We note that smoothing results of this type were first obtained by Bourgain, see \cite{Bbook}, for the cubic NLS on $\R^2$. Also see \cite{kervar} for an extension of this result to $\R^n$, and \cite{erin} for cubic NLS on $\R$. There are also smoothing estimates on the torus, see, e.g., \cite{chr}, \cite{et1}, \cite{kst1}, \cite{et2}, \cite{det}, \cite{kst2}. For initial-boundary value problems, it appears that Theorem~\ref{thm:smooth} is the first smoothing result. We note that the gain in regularity matches the gain for cubic NLS both on the torus  \cite{et2} and the real line \cite{erin}. However, for defocusing NLS on the torus the smoothing gain can be improved for integer $s\geq 2$  using complete integrability methods, see \cite{kst2}.  

As an application of Theorem~\ref{thm:smooth} and a priori estimates at the energy level (see Section~\ref{sec:appendix} and \cite{bonaetal}), we obtain bounds on higher order Sobolev norms:
\begin{theorem}\label{thm:growth} In the case $s\in[1,\frac52)$, $s\neq \frac32$, $g\in H^s(\R^+)$, and $h\in H^{\frac{2s+1}4}(\R^+)\cap H^1(\R^+)$, the solution $u$ is global
and the smoothing statement holds for all times. Moreover, in the defocusing case $\|u\|_{H^s(\R^+)}$ grows at most polynomially, whereas in the focusing case it grows at most exponentially.
\end{theorem}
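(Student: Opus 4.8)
The plan is to feed the a priori control of the energy-level norms into the smoothing decomposition of Theorem~\ref{thm:smooth}, iterated over consecutive short time intervals. First I would record the two conservation-type identities adapted to the half line. Differentiating the mass $\int_0^\infty |u|^2\,dx$ in time and integrating by parts produces only the boundary flux $2\,\mathrm{Im}\big(\bar h(t)\,u_x(0,t)\big)$, which carries no sign of $\lambda$; controlling this flux (as in Section~\ref{sec:appendix} and \cite{bonaetal}) keeps the mass bounded, at worst polynomially, in \emph{both} the focusing and defocusing cases. The same computation for the energy $\int_0^\infty \big(\tfrac12|u_x|^2-\tfrac{\lambda}{4}|u|^4\big)\,dx$ gives, when $\lambda=-1$, a uniform (at worst polynomial) bound $\|u(t)\|_{H^1(\R^+)}\le B$, since the energy then dominates $\|u_x\|_{L^2}^2$. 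When $\lambda=+1$ the energy no longer controls $\|u_x\|_{L^2}$, so I would estimate $\|u_x\|_{L^2}^2$ through the energy together with the Gagliardo--Nirenberg inequality $\|u\|_{L^4}^4\lesssim \|u\|_{L^2}^3\|u_x\|_{L^2}$ and Young's inequality, turning the identity into a differential inequality that yields $\|u(t)\|_{H^1(\R^+)}\lesssim e^{ct}$ by Gronwall. Because the cubic problem is $L^2$-subcritical, the local existence time depends only on the (sign-independently controlled) mass, so these $H^1$ bounds rule out blow-up and give a global solution to which Theorem~\ref{thm:smooth} applies on every subinterval.

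Next I would set up the iteration around the decomposition $v(t)=u(t)-W_0^t(g,h)$, using $\|u(t)\|_{H^s}\le \|W_0^t(g,h)\|_{H^s}+\|v(t)\|_{H^s}$. The linear evolution $W_0^t(g,h)$ keeps its $H^s_x$ norm controlled by the fixed data norms $\|g\|_{H^s}+\|h\|_{H^{(2s+1)/4}}$, so all the growth is carried by $v$. I would partition $[0,T]$ into intervals of length $\delta=\delta(\text{mass})$, which is \emph{uniform} in both cases, and on each $[t,t+\delta]$ apply the trilinear smoothing estimate underlying Theorem~\ref{thm:smooth} in its sharpest form: place the top-order $s+a$ derivatives on one factor and the other two at regularity $H^1$. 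Together with the local theory this gives, for the Duhamel increment $D(t)=v(t+\delta)-e^{i\delta\partial_{xx}}v(t)$, a bound $\|D(t)\|_{H^{s+a}}\lesssim \|u(t)\|_{H^1}^{2}\,\|u(t)\|_{H^s}$, with the boundary input threaded through the time-shifted data $h(\cdot+t)$.

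The gain $a>0$ then enters through interpolation, and this is precisely what separates polynomial from exponential growth. Setting $N(t)=\|v(t)\|_{H^{s+a}}$ and $M(t)=\|u(t)\|_{H^s}$, the free propagator is an $H^{s+a}$ isometry on the increment, so $N(t+\delta)\le N(t)+\|D(t)\|_{H^{s+a}}$. Interpolating $\|v\|_{H^s}\le \|v\|_{H^1}^{1-\theta}\|v\|_{H^{s+a}}^{\theta}$ with $\theta=(s-1)/(s+a-1)<1$ and inserting the $H^1$ bound on $v$ gives $M(t)\lesssim 1+N(t)^\theta$ with a \emph{strictly sublinear} exponent. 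Feeding this back produces the recursion $N(t+\delta)\le N(t)+C\|u(t)\|_{H^1}^{2}\,\big(1+N(t)^\theta\big)$. In the defocusing case $\|u\|_{H^1}$ and $\delta$ are bounded, so a recursion $x_{j+1}\le x_j+C(1+x_j^\theta)$ with $\theta<1$ forces $N$, hence $M$, to grow at most polynomially in the number of steps, i.e.\ polynomially in $t$. In the focusing case the step $\delta$ is still uniform (it depends only on the mass), so only the coefficient $\|u(t)\|_{H^1}^{2}\lesssim e^{ct}$ grows; carrying this exponential coefficient through the same sublinear recursion and choosing $\beta\ge 2c/(1-\theta)$ closes the estimate at $N(t)\lesssim e^{\beta t}$, giving at most exponential growth of $\|u(t)\|_{H^s}$. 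The very same iteration, now known to terminate with finite bounds, also shows $v\in C^0_tH^{s+a}_x([0,T]\times\R^+)$ for every $T$, so the smoothing statement persists for all time.

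The main obstacle is producing the per-step increment bound in exactly this form: the top-order norm must be controlled by a fixed power of the low ($H^1$) norm times a \emph{single} factor of $\|u\|_{H^s}$, uniformly over all subintervals and with the half-line boundary contribution carried correctly by the shifted data $h(\cdot+t)$. Once this structural estimate is in hand, the interpolation gain from $a>0$ makes the recursion strictly sublinear in the top norm, and the polynomial-versus-exponential dichotomy is automatic, depending only on whether the coefficient $\|u\|_{H^1}^{2}$ is bounded or merely exponentially controlled. A secondary but genuinely half-line difficulty is the control of the boundary flux $u_x(0,t)$ in the energy identities, which is where the boundary geometry, rather than the nonlinear interaction, does the real work.
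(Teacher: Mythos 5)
Your overall architecture (energy-level a priori bounds feeding an iteration of the local smoothing estimate over short time intervals) is the right one and matches the paper's strategy, but the mechanism you propose for closing the iteration has several genuine gaps. First, your per-step increment bound $\|D(t)\|_{H^{s+a}}\lesssim \|u(t)\|_{H^1}^2\|u(t)\|_{H^s}$ — two factors at $H^1$, one at $H^s$ — is a refined trilinear estimate that is neither proved in the paper nor needed: Propositions~\ref{prop:smooth} and \ref{prop:smooth2} only give $\||u|^2u\|_{X^{s+a,-b}}\lesssim\|u\|_{X^{s,b}}^3$ with all three factors at the top regularity. You correctly flag this as "the main obstacle," but without it your sublinear recursion does not close: using the available cubic bound instead forces $\theta$ to satisfy $3\theta<1$, which fails for $s$ away from $1$ once $a<\tfrac12$. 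The paper avoids the refined estimate entirely by accepting the cruder per-step cost $f(T)^3$, taking $J\approx T/\delta\approx T f(T)^4$ steps, obtaining $\langle T\rangle f(T)^7$ for the $H^{s+a}$ norm of the nonlinear part, and then climbing a regularity ladder $H^1\to H^{1+a}\to H^{1+2a}\to\cdots$ (which is exactly where the exponents $1,8,57$ come from).

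Second, your recursion $N(t+\delta)\le N(t)+\|D(t)\|_{H^{s+a}}$ assumes the half-line propagator is an isometry on $H^{s+a}(\R^+)$. It is not: by Remark~\ref{rmk:Hslinbound} and Lemma~\ref{lem:Hs0}(iv), $W_0^t(\cdot,0)$ is only \emph{bounded} on $H^s(\R^+)$, with a constant that may exceed $1$. Iterating your recursion over $J$ steps would then compound this constant to $C^J$, i.e.\ exponential growth in $T$, destroying the polynomial conclusion. The paper's telescoping identity $u(J\delta)-W_0^{J\delta}(g,h)=\sum_{k=1}^J W_{k\delta}^{J\delta}\big(u(k\delta)-W_{(k-1)\delta}^{k\delta}(u((k-1)\delta),h),0\big)$ is designed precisely so that the linear operator is applied \emph{once} to each single-step increment, so the constant enters additively rather than multiplicatively. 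Third, your claim that the local existence time depends only on the mass (hence is uniform in the focusing case) is not supported by the local theory established here, which gives $\delta\approx[C+\|g\|_{H^s}]^{-4}$; moreover, on the half line the mass identity carries the boundary flux $\int_0^t u_x(0,s)\overline{h(s)}\,ds$, whose control in Section~\ref{sec:appendix} is coupled back to $\|u\|_{L^2}\|u_x\|_{L^2}$, so the mass is \emph{not} controlled independently of the $H^1$ norm and is itself only exponentially bounded in the focusing case. This is why the paper's focusing bound is exponential: both the per-step cost and the number of steps degrade with the exponentially growing $H^1$ norm.
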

We note that one cannot expect to obtain better than exponential bounds in the focusing case since  the energy estimates at $H^1$ level give only  exponential bounds, see Section~\ref{sec:appendix}.

 To prove Theorem~\ref{thm:smooth}   we   combine a variety of estimates for the boundary operator (solving \eqref{eq:Wb}) and the nonlinearity in $X^{s,b}$ spaces. We also need to establish the wellposed theory in $X^{s,b}$ spaces. We obtain the solution of \eqref{nls} as a fixed point of \eqref{eq:gamma} in $X^{s,b}(\R\times [0,T])$ after extending the initial data $g$ to $\R$. As such these are limits of smooth solutions, and in particular they are mild solutions as defined in \cite{holmer}. Note that by the fixed point argument the solution is the unique solution of \eqref{eq:gamma}, however it is not a priori clear whether its restriction to $\R^+$ is independent of the extension of $g$. We resolve this issue in Section~\ref{sec:unique}.

We now discuss briefly the organization of the paper.   In Section~\ref{sec:defin} we define the notion of a solution. For 
$g\in H^s(\R^+)$ and $h\in H^{\frac{2s+1}{4}}(\R^+)$, with the additional compatibility condition $g(0)=h(0)$ for $s>\frac12$, we are looking for a solution 
\be\label{eq:space}
u \in X^{s,b}(\R\times [0,T]) \cap C^0_tH^s_x([0,T]\times \R) \cap C^0_xH^{\frac{2s+1}{4}}_t(\R \times [0,T]).
\ee

It is a well known fact that  (see \eqref{def:xsb} below for the definition of the $X^{s,b}$ norm)
$$u \in X^{s,b}(\R\times [0,T]) \subset C^0_tH^s_x([0,T]\times \R)$$
for any $b>\frac{1}{2}$. However, to close the fixed point argument we need to take  $b<\frac{1}{2}$. For this reason we need to prove the continuity of the solution directly via additional estimates for the linear evolution $W_0^t(g,h)$ (corresponding to \eqref{nls} with $\lambda=0$). The reader should keep in mind that we estimate two distinct linear processes. One is the usual solution of the free Schr\"odinger equation with initial data $g$ which we denote by $W_{\R}g$ and the other is the linear solution, $W_0^t(0,h)$ to the IBVP \eqref{eq:Wb}. We state and prove these estimates in Section~\ref{sec:apriori}, in particular in Lemmas \ref{lem:kato} and \ref{lem:wbcont}, and Proposition \ref{prop:wbh}.  These estimates also explain the regularity level of the boundary function $h$ and the selection of the spaces that are used in order to close the fixed point argument.
In the second part of Section~\ref{sec:apriori} we prove the estimates on the nonlinear terms of \eqref{eq:duhamel}, dictated by Proposition \ref{prop:duhamelkato}. In Section~\ref{sec:prt3}, we establish the local wellposedness theory, see Theorem~\ref{thm:local}. Uniqueness of the solution is immediate in the auxiliary space \eqref{eq:space}. We also present a proof of unconditional uniqueness (uniqeness of mild solutions) in Section~\ref{sec:unique}.   In Section~\ref{sec:prt3} we also discuss the dependence of the local existence time on the norms of the initial data. The estimates on the time step are crucial when we patch together local solutions to obtain a global continuous solution. This is used in Section~\ref{sec:prt1t2}, where we prove the two main theorems of this paper, Theorems \ref{thm:smooth} and \ref{thm:growth}. Section: \ref{sec:appendix} is an appendix that presents the needed a priori estimates at the energy level. As  expected the a priori estimates in the case of the focusing case are more subtle.   We close the Appendix with a technical lemma that is used throughout the text.

\subsection{Notation}

$$
\widehat g(\xi)=\mathcal F g(\xi)=\int_{\R^n} e^{-ix\cdot\xi} g(x) dx.
$$

$$
\la \xi\ra=\sqrt{1+|\xi|^2}
$$

$$
\|g\|_{H^s}=\|g\|_{H^s(\R)}=\Big(\int_\R \la \xi\ra^{2s} |\widehat g(\xi)|^2 d\xi\Big)^{1/2}
$$

For an  interval $I$, we define $H^s(I)$  norm as
$$
\|g\|_{H^s(I)}:=\inf\big\{\|\tilde g\|_{H^s(\R)}: \tilde g(x)=g(x),\, x\in I\big\}.
$$

We also denote the linear Schr\"odinger propagator (for $g\in L^2(\R)$) by
$$
W_\R g(x,t)=e^{it\Delta} g(x)= \mathcal F^{-1}\big[e^{-it|\cdot|^2} \widehat g(\cdot)\big](x).
$$

For a space time function $f$, we denote
$$
D_0f(t)=f(0,t).
$$
Finally, we  reserve the notation $\eta(t)$ for a smooth compactly supported function which is equal to $1$ on $[-1,1]$. 

\section{Notion of a solution} \label{sec:defin}

Throughout the paper we have $s\in(0,\frac52) $, $s\neq\frac12, \frac32$.
We define $H^s(\R^+)$ norm as
$$
\|g\|_{H^s(\R^+)}:=\inf\big\{\|\tilde g\|_{H^s(\R)}: \tilde g(x)=g(x),\, x>0\big\}.
$$
Note that  we  have $\|g^\prime\|_{H^{s-1}(\R^+)}\leq \|g\|_{H^{s}(\R^+)}$. 
If $g  \in  H^s(\R^+)$ for some $s>\frac12$, take an extension $\tilde g\in H^s(\R)$. By Sobolev embedding    $\tilde g $ is continuous on $\R$, and hence $g(0)$ is well defined. 
We have the following lemma concerning extensions of $H^s(\R^+)$ functions. 
\begin{lemma}\label{lem:Hs0} Let $h\in H^s(\R^+)$ for some $-\frac12< s<\frac52$. \\
i) If $-\frac12< s<\frac12$, then $\|\chi_{(0,\infty)}h\|_{H^s(\R)}\les \|h\|_{H^s(\R^+)}$.\\
ii) If $\frac12<s<\frac32$  and $h(0)=0$, then $\|\chi_{(0,\infty)}h\|_{H^s(\R)}\les \|h\|_{H^s(\R^+)}$.\\
iii) If $\frac12<s<\frac32$, then $\| h_{even}\|_{H^s(\R)}\les \|h\|_{H^s(\R^+)}$.\\
iv) If $\frac12 <s<\frac52$, $s\neq \frac32$, and $h(0)=0$, then $ \|h_{odd}\|_{H^s(\R)}\les \|h\|_{H^s(\R^+)}$.\\
Here $h_{even}(x)=h(|x|)$, and $h_{odd}$ is defined analogously.
\end{lemma}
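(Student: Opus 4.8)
The plan is to prove the four extension estimates by working on the Fourier side and exploiting the characterization of $H^s(\R)$ via difference quotients (the Gagliardo--Slobodeckij seminorm) for the non-integer ranges involved. The key structural fact is that multiplication by $\chi_{(0,\infty)}$ is bounded on $H^s(\R)$ precisely when $|s|<\frac12$, which immediately gives part (i): given any extension $\widetilde h$ of $h$ with $\|\widetilde h\|_{H^s(\R)}$ near $\|h\|_{H^s(\R^+)}$, we have $\chi_{(0,\infty)}h=\chi_{(0,\infty)}\widetilde h$, and the multiplier bound yields $\|\chi_{(0,\infty)}\widetilde h\|_{H^s(\R)}\lesssim \|\widetilde h\|_{H^s(\R)}$. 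The cleanest way to see the multiplier bound is through the equivalent seminorm $\int\!\int |f(x)-f(y)|^2|x-y|^{-1-2s}\,dx\,dy$ together with the $L^2$ norm; cutting by $\chi_{(0,\infty)}$ only adds contributions where exactly one of $x,y$ is positive, and for $s<\frac12$ the singularity $|x-y|^{-1-2s}$ is integrable enough that this extra term is controlled by $\|f\|_{L^2}^2$ plus the original seminorm.

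For parts (ii)--(iv) the ranges push $s$ past $\frac12$, so the naive multiplier bound fails and the hypothesis $h(0)=0$ must be used. The plan for (ii) is to note that when $\frac12<s<\frac32$ and $h(0)=0$, the function $\chi_{(0,\infty)}h$ has no jump discontinuity at the origin, so the Gagliardo seminorm across the origin converges; I would take a good extension $\widetilde h$, write $\chi_{(0,\infty)}\widetilde h$, and estimate its $H^s(\R)$ seminorm by splitting the double integral into the region where $x,y$ are on the same side (controlled directly by $\|\widetilde h\|_{H^s}$) and the crossing region, where the factor $\widetilde h(x)$ (for $x>0$, $y<0$) is small near the origin because $\widetilde h(0)=0$, giving a bound like $\int_0^\infty\int_{-\infty}^0 |\widetilde h(x)|^2|x-y|^{-1-2s}\,dy\,dx\lesssim \int_0^\infty |\widetilde h(x)|^2 x^{-2s}\,dx$, which is a Hardy-type integral bounded by $\|\widetilde h\|_{H^s}^2$ via the vanishing at $0$ (a fractional Hardy inequality). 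Part (iv) for the odd extension is handled by the same philosophy: the odd reflection of a function vanishing at the origin is continuous, and one checks that the seminorm of $h_{\mathrm{odd}}$ decomposes into same-side pieces equal to twice the original seminorm plus a crossing piece $\int_0^\infty\int_0^\infty |\widetilde h(x)-\widetilde h(y)|^2|x+y|^{-1-2s}\,dx\,dy$, which is dominated by the standard seminorm since $|x+y|\ge|x-y|$; for the range $s>\frac32$ one differentiates and reduces to the $s-1\in(\frac12,\frac32)$ case, using $h_{\mathrm{odd}}'=(h')_{\mathrm{even}}$ and then invoking (iii).

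Part (iii), the even extension on $\frac12<s<\frac32$ with no vanishing hypothesis, is in a sense the easiest of the three hard ones: the even reflection never creates a jump regardless of $h(0)$, and the crossing term in the seminorm is exactly the convergent integral $\int_0^\infty\int_0^\infty |\widetilde h(x)-\widetilde h(y)|^2|x+y|^{-1-2s}\,dx\,dy\lesssim \|\widetilde h\|^2_{H^s}$, so no Hardy inequality is needed. I would organize the proof to prove (iii) first as a warm-up, then (ii) and (iv) using the fractional Hardy inequality, and finally reduce the top range of (iv) to (iii) by differentiation.

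The main obstacle I anticipate is the careful treatment of the crossing region in the borderline range, specifically justifying the fractional Hardy inequality $\int_0^\infty |f(x)|^2 x^{-2s}\,dx\lesssim \|f\|_{H^s(\R)}^2$ for $\frac12<s<\frac32$ under the constraint $f(0)=0$, and tracking exactly where the excluded values $s=\frac12,\frac32$ would cause the relevant integrals to diverge or the Hardy constant to blow up. Getting the endpoints right — and confirming that $h(0)=0$ is genuinely what makes the crossing integral converge rather than merely a convenience — is the delicate part, whereas the same-side contributions are routine comparisons to the ambient $H^s(\R)$ seminorm.
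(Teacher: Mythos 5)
Your overall architecture is plausible, but there are two concrete gaps, both stemming from the ranges of $s$ for which your chosen tools actually apply. First, in part (i) the lemma covers $-\frac12<s<0$, and the Gagliardo--Slobodeckij double-integral characterization of $H^s$ simply does not exist for negative $s$; your crossing-term argument only establishes the multiplier bound for $0\le s<\frac12$. This is fixable (the adjoint of multiplication by the real-valued function $\chi_{(0,\infty)}$ is itself, so the case $s\in(-\frac12,0)$ follows by duality from the case $s\in(0,\frac12)$), but as written the negative range is unaddressed. The paper instead gets the full range at once from the weighted $L^2$ boundedness of the Hilbert transform, using that $\la\xi\ra^{2s}$ is an $A_2$ weight exactly for $|s|<\frac12$. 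Second, and more seriously, the order-$s$ Gagliardo seminorm is equivalent to the $\dot H^s$ seminorm only for $0<s<1$; for $s\ge 1$ the double integral $\iint |f(x)-f(y)|^2|x-y|^{-1-2s}\,dx\,dy$ is infinite for every nonconstant $f$. So your direct crossing-region computations for (ii) and (iii) are valid only on $\frac12<s<1$, and the stated ranges include $[1,\frac32)$. To cover $1\le s<\frac32$ you must differentiate first: for (ii), the hypothesis $h(0)=0$ guarantees that the distributional derivative of $\chi_{(0,\infty)}h$ is $\chi_{(0,\infty)}h'$ with no delta at the origin, reducing to the multiplier bound at level $s-1\in(-\frac12,\frac12)$ --- which is precisely the paper's one-line proof of (ii) for the entire range, with no Hardy inequality needed. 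A similar reduction handles (iii) on $[1,\frac32)$. Relatedly, the fractional Hardy inequality $\int_0^\infty|f(x)|^2x^{-2s}\,dx\lesssim\|f\|_{H^s}^2$ for $\frac12<s<1$ under $f(0)=0$ is true but is itself a nontrivial lemma (pointwise H\"older continuity $|f(x)|\lesssim |x|^{s-1/2}$ is not sufficient, as it only gives a logarithmically divergent bound); you flag it as the main obstacle but supply no proof.

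For contrast, the paper's route avoids all of this by systematic reduction of order: (ii) follows from (i) by differentiation as above; (iii) follows from (ii) by subtracting the even part $f(x)=[\widetilde h(x)+\widetilde h(-x)]/2$ of an extension, so that $h-f$ vanishes at the origin and its even extension splits into two half-line pieces each controlled by (ii); and (iv) on $(\frac32,\frac52)$ uses $h_{odd}'=(h')_{even}$ together with (iii) at level $s-1$, exactly as in the last step of your plan. If you adopt the differentiation reductions for the ranges where the Gagliardo seminorm fails, and either prove or cite the fractional Hardy inequality for $\frac12<s<1$ (or simply use the paper's reduction there too), your argument closes; as it stands, parts (i) on $(-\frac12,0)$ and (ii)--(iii) on $[1,\frac32)$ are not proved.
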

The first two parts were proved in \cite{collianderkenig}. Part i) follows from the weighted $L^2$ boundedness of Hilbert transform and the fact that $\la\xi\ra^{2s}$ is an $A_2$ weight for $s\in (-\frac12,\frac12)$. For ii) note that, since $h(0)=0$,  the distributional derivative of $\chi_{(0,\infty)} h$ is $\chi_{(0,\infty)} h^\prime$, and use i). Part iii) follows from part ii) as follows: let $\widetilde{h}$ be an $H^s(\R)$ extension of $h$ with $\|\widetilde h\|_{H^s}\les \|h\|_{H^s(\R^+)}$. Let $f(x)=[\widetilde h(x)+\widetilde h(-x)]/2$. Note that $\|f\|_{H^s(\R)}\les \|h\|_{H^s(\R^+)}$.
Since $f(0)=h(0)$, by part ii), we have $\|(h-f)_{even}\|_{ H^s(\R)}\les \|h-f\|_{H^s(\R^+)}\les \|h\|_{H^s(\R^+)}$. Since $f_{even}=f$, we have iii). To obtain iv) note that it follows from ii) for $\frac12<s<\frac32$. Since $h(0)=0$, for $\frac32<s<\frac52$, $h_{odd}$ is continuously differentiable, and   $h_{odd}^\prime=(h^\prime)_{even}$.
Thus, $\|h_{odd}\|_{\dot H^s(\R)}\les \|h_{odd}^\prime\|_{H^{s-1}(\R)}= \|(h^\prime)_{even}\|_{H^{s-1}(\R)}\les \|h^\prime\|_{H^{s-1}(\R^+)}\les 
\|h\|_{H^{s}(\R^+)}$.

To construct the solutions of \eqref{nls} we first consider the linear problem:
\begin{align}\label{linearnls}
&iu_t+u_{xx} =0,\,\,\,\,x\in\R^+, t\in \R^+,\\
&u(x,0)=g(x)\in H^s(\R^+), \,\,\,\,u(0,t)=h(t)\in H^{\frac{2s+1}4}(\R^+), \nn
\end{align}
with the compatibility condition $h(0)=g(0)$ for $s>\frac12$. Note that the uniqueness of the solutions of equation \eqref{linearnls} follows  by considering the equation with $g=h=0$ with the method of odd extension. We now construct the unique solution of \eqref{linearnls}, that we denote by    $W_{0}^t(g,h)$,  for $t\in [0,1]$. Note that
$$
W_0^t(g,h)=W_0^t(0,h-p)+W_\R(t) g_e,
$$
where $g_e$ is an $H^s$ extension of $g$ to $\R$ satisfying $\|g_e\|_{H^s(\R)}\les \|g\|_{H^s(\R^+)}$. Moreover,  $p(t)= \eta(t) [W_\R(t) g_e]\big|_{x=0}$, which is well-defined and is in  $H^{\frac{2s+1}{4}}(\R^+)$ by Lemma~\ref{lem:kato} below. The properties of the free Schr\"odinger evolution are well known.
 To understand the first summand, $W_0^t(0,h)$, consider the linear boundary value problem \eqref{eq:Wb} with 
   $h\in H^{\frac{2s+1}{4}}(\R^+)$. Moreover for $s>\frac12$ we have the compatibility condition $h(0)=0$.
Following \cite{bonaetal} we can write the solution as $ W_0^t(0, h)=W_{1}h+W_{2}  h $, where
\begin{align}\label{eq:w1}
W_{1}h(x,t)&=\frac1\pi\int_{0}^\infty e^{-i\beta^2t+i\beta x}\beta \widehat  h(-\beta^2) d\beta,\\
W_{2}h(x,t)&=\frac1\pi \int_{0}^\infty e^{i\beta^2 t-\beta x} \beta \widehat h(\beta^2) d\beta.
\end{align}
Here by a slight abuse of notation
\be\label{eq:halffourier}
\widehat h(\xi)=\mathcal F\big(\chi_{(0,\infty)} h\big)(\xi)=\int_0^\infty e^{-i\xi t } h(t) dt.
\ee
We refer the reader the reader to \cite{bonaetal} for the derivation of $W_1$ and $W_2$. Formally, one can check that both $W_1$ and $W_2$ satisfy \eqref{eq:Wb} by differentiation. The boundary condition at $x=0$ can be justified by Fourier inversion. To see that the initial condition at $t=0$ is satisfied  apply a complex change of variable  in $W_2$ noting that $\widehat h$ is analytic in the lower half plane, for the details see \cite{bonaetal}.

By a change of variable and   Lemma~\ref{lem:Hs0}, under the conditions above we have
\be\label{eq:psiineq}
\sqrt{\int_0^\infty \langle \beta\rangle^{2s} \big|\beta \widehat h(\pm \beta^2)\big|^2 d\beta } \les \|\chi_{(0,\infty)} h\|_{H^\frac{2s+1}4(\R )} \les \|h\|_{H^\frac{2s+1}4(\R^+)}.
\ee
Note that $W_{1}$ is well-defined for $x ,t \in \R$. We also extend $W_{2}$ to all $x$ by
\be\label{eq:w2}
W_{2}h(x,t) =\frac1\pi \int_{0}^\infty e^{i\beta^2 t-\beta x} \rho(\beta x) \beta \widehat h(\beta^2) d\beta,
\ee
where $\rho(x)$ is a smooth function supported on $(-2,\infty)$, and $\rho(x)=1$ for $x>0$.

Therefore the solution of \eqref{linearnls} for $t\in [0,1] $ is given by
$$
W_0^t(g,h)=W_0^t(0,h-p)+W_\R(t) g_e,\,\,\,\,p(t)= \eta(t) [W_\R(t) g_e](0).
$$
We note that $W_0^t(g,h)$ is well-defined for $x,t\in\R$, and its restriction to $\R^+\times [0,1]$ is independent of the extension $g_e$.

The following remark will be important in the proofs of Theorem~\ref{thm:smooth} and Theorem~\ref{thm:growth}.

\begin{rmk}\label{rmk:Hslinbound}
Note that $W_{0}^t(g_1,h)-W_{0}^t(g_2,h)=W_{0}^t(g_1-g_2,0)$.  One can also obtain $W_{0}^t (g,0)$  using the method of odd extension. This implies that 
$$\|W_{0}^t (g,0)\|_{H^s(\R^+)} \leq \|W_{\R}(t) g_{odd}\|_{H^s(\R)} =\|g_{odd}\|_{H^s(\R)}\les  \|g\|_{H^s(\R^+)},$$
where we used part iv) of Lemma~\ref{lem:Hs0} in the last inequality. 
\end{rmk}

 Consider the integral equation
\begin{equation}\label{eq:duhamel}
u(t)=\eta(t)W_\R(t)g_e+ \eta(t) \int_0^tW_\R(t- t^\prime)  F(u) \,d t^\prime + \eta(t) W_0^t\big(0, h-p-q  \big)(t),
\end{equation}
where
\begin{multline*}
F(u)=\eta(t/T) | u|^2  u,\,\,\,\,p(t)= \eta(t ) D_0 (W_\R g_e),\,\,\,\text{ and }\\  q(t)=\eta(t ) D_0\Big(\int_0^tW_\R(t- t^\prime)  F(u)\, d t^\prime \Big).
\end{multline*}
Here $D_0f(t)=f(0,t)$, and $g_e$ is an  $H^s$ extension of $g$ to $\R$.  
In what follows we will prove that the  integral equation \eqref{eq:duhamel} has a unique solution in a suitable Banach space on $\R\times \R$ for some $T<1$. Using the definition of the boundary operator, it is clear that the restriction of $u$ to $\R^+\times [0,T]$ satisfies  \eqref{nls} in the distributional sense. Also note that the smooth solutions of \eqref{eq:duhamel} satisfy \eqref{nls} in the classical sense.

We work with the space $X^{s,b}(\R\times\R)$  \cite{bourgain,Bou2}:
\be\label{def:xsb}
\|u\|_{X^{s,b}}=\big\| \widehat{u}(\tau,\xi)\la \xi\ra^{s} \la \tau+\xi^2\ra^{b} \big\|_{L^2_\tau L^2_\xi}.
\ee
We  recall the embedding $X^{s,b}\subset C^0_t H^{s}_x$ for $b>\frac{1}{2}$ and the following inequalities from \cite{bourgain,gtv}.
 
For any $s,b$ we have
\begin{equation}\label{eq:xs1}
\|\eta(t)W_\R g\|_{X^{s,b}}\les \|g\|_{H^s}.
\end{equation}
For any $s\in \mathbb R$,  $0\leq b_1<\frac12$, and $0\leq b_2\leq 1-b_1$, we have
\begin{equation}\label{eq:xs2}
\Big\| \eta(t) \int_0^t W_\R(t-t^\prime)  F(t^\prime ) dt^\prime \Big\|_{X^{s,b_2} }\lesssim   \|F\|_{X^{s,-b_1} }.
\end{equation}
Moreover, for $T<1$, and $-\frac12<b_1<b_2<\frac12$, we have
\begin{equation}\label{eq:xs3}
\|\eta(t/T) F \|_{X^{s,b_1}}\les T^{b_2-b_1} \|F\|_{X^{s,b_2}}.
\end{equation}

\begin{definition}
We say \eqref{nls} is locally wellposed in $H^s(\R^+)$, if for any $g\in H^s(\R^+)$ and $h\in H^{\frac{2s+1}{4}}(\R^+)$, with the additional compatibility condition $g(0)=h(0)$ for $s>\frac12$, the equation \eqref{eq:duhamel} has a unique solution in
$$
X^{s,b}(\R\times [0,T]) \cap C^0_tH^s_x([0,T]\times \R) \cap C^0_xH^{\frac{2s+1}{4}}_t(\R \times [0,T]),
$$
for any $b<\frac12$.   Moreover, if $u$ and $v$ are two such solutions coming from different extensions $g_{e1}$, $g_{e2}$, then their restriction to $[0,\infty)\times[0,T]$ are the same.  Furthermore,   if $g_n\to g$ in $H^s(\R^+)$ and $h_n\to h$ in $H^{\frac{2s+1}{4}}(\R^+)$, then $u_n\to u$ in the space above. 
\end{definition}

\begin{theorem} \label{thm:local} Fix $s\in (0, \frac52)$, $s\neq\frac12,\frac32$.  Then \eqref{nls} is locally wellposed in $H^s(\R^+)$ with
$T\approx [C+\|g\|_{H^s(\R^+)}]^{-\frac4{2s+1}}$, where the constant $C$ depends on $\|g\|_{L^2}+\|h\|_{H^{\frac{2s+1}{4}}(\R^+)}$.
\end{theorem}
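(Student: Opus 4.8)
The plan is to prove local wellposedness by a contraction mapping argument applied to the integral equation \eqref{eq:duhamel} in the space
$$
Z := X^{s,b}(\R\times[0,T]) \cap C^0_tH^s_x([0,T]\times\R)\cap C^0_xH^{\frac{2s+1}{4}}_t(\R\times[0,T]),
$$
for a suitable $b<\frac12$. First I would denote by $\Gamma u$ the right-hand side of \eqref{eq:duhamel} and establish that $\Gamma$ maps a ball in $Z$ into itself and is a contraction there, provided $T$ is chosen small depending on the data norms. The key is to bound each of the three terms on the right of \eqref{eq:duhamel}. The free evolution term $\eta(t)W_\R(t)g_e$ is controlled by \eqref{eq:xs1} in $X^{s,b}$ and by Lemma~\ref{lem:kato} in the $C^0_xH^{\frac{2s+1}4}_t$ component; here $\|g_e\|_{H^s(\R)}\les\|g\|_{H^s(\R^+)}$. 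The Duhamel term is estimated in $X^{s,b_2}$ by \eqref{eq:xs2}, which reduces matters to bounding $\|F(u)\|_{X^{s,-b_1}}$ with $F(u)=\eta(t/T)|u|^2u$; this is exactly the nonlinear trilinear estimate supplied by Proposition~\ref{prop:duhamelkato}, and the factor $\eta(t/T)$ together with \eqref{eq:xs3} produces the crucial positive power $T^{b_2-b_1}$ that makes the map contractive for small $T$.

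The subtler point is the boundary term $\eta(t)W_0^t(0,h-p-q)$. Using $W_0^t(0,\cdot)=W_1(\cdot)+W_2(\cdot)$ together with the a priori estimates for the boundary operator (Proposition~\ref{prop:wbh} and Lemma~\ref{lem:wbcont}), I would control this term in all three components of $Z$ by $\|h-p-q\|_{H^{\frac{2s+1}4}(\R^+)}$. The contributions $p(t)=\eta(t)D_0(W_\R g_e)$ and $q(t)=\eta(t)D_0\big(\int_0^t W_\R(t-t')F(u)\,dt'\big)$ are the traces at $x=0$ of the first two terms, and Lemma~\ref{lem:kato} (applied to $W_\R g_e$ and to the Duhamel term, respectively) guarantees they lie in $H^{\frac{2s+1}4}(\R^+)$ with the right bounds; in particular the $q$-bound again carries a power of $T$ through the nonlinear estimate so it can be absorbed. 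Collecting the estimates, one obtains
$$
\|\Gamma u\|_Z \les C+\|g\|_{H^s(\R^+)}+\|h\|_{H^{\frac{2s+1}4}(\R^+)} + T^{\theta}\|u\|_Z^3
$$
for some $\theta>0$, and a comparable difference estimate $\|\Gamma u-\Gamma v\|_Z\les T^\theta(\|u\|_Z^2+\|v\|_Z^2)\|u-v\|_Z$; choosing $T$ as in the statement makes $\Gamma$ a contraction on a ball, yielding a unique fixed point.

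The remaining assertions in the definition of local wellposedness follow without a new contraction. Continuity of the solution in the time variable (membership in $C^0_tH^s_x$ rather than merely $X^{s,b}$ with $b<\frac12$) is exactly what the additional linear estimates in Section~\ref{sec:apriori} are designed to give, since the Duhamel term is handled directly and the two linear pieces $W_\R g_e$ and $W_0^t(0,h)$ are continuous by those lemmas. Independence of the restriction to $\R^+\times[0,T]$ from the choice of extension $g_e$ uses Remark~\ref{rmk:Hslinbound}: two extensions differ by a function vanishing on $\R^+$, so the corresponding linear evolutions agree on $\R^+$, and this propagates through the fixed-point iteration. Continuous dependence on $(g,h)$ is obtained from the same difference estimates applied to two sets of data. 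Regarding the explicit time step, I would track the power of $T$ in the nonlinear estimate: the exponent $\frac{4}{2s+1}$ comes from balancing $T^\theta\|u\|_Z^3$ against the linear data size, where the relevant power $\theta$ is tied to the regularity $\frac{2s+1}{4}$ of the boundary data through the trace estimates.

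The main obstacle I anticipate is the nonlinear term together with the coupling through the boundary corrections $p$ and $q$. One must verify that the trace $q$ of the Duhamel term genuinely lands in $H^{\frac{2s+1}4}(\R^+)$ with a bound carrying a positive power of $T$, so that feeding the nonlinearity back into the boundary operator does not destroy the contraction; this requires that the trace estimate (Lemma~\ref{lem:kato}) interact cleanly with the $X^{s,b}$ nonlinear estimate (Proposition~\ref{prop:duhamelkato}), and making these compatible at $b<\frac12$ — rather than the easier $b>\frac12$ — is where the delicate bookkeeping lies.
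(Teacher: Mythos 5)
Your overall architecture---contraction for \eqref{eq:duhamel} via \eqref{eq:xs1}, \eqref{eq:xs2}, \eqref{eq:xs3}, the trilinear estimates, Proposition~\ref{prop:wbh} for the boundary operator, and Proposition~\ref{prop:duhamelkato} for the trace $q$---matches the paper's proof. But there are two genuine gaps. First, the quantitative time step. You propose to extract the exponent $-\frac{4}{2s+1}$ by ``balancing $T^\theta\|u\|_Z^3$ against the linear data size,'' with $\theta$ ``tied to the regularity $\frac{2s+1}{4}$.'' This does not work: the only power of $T$ in the scheme comes from \eqref{eq:xs3}, which gives $T^{b_2-b_1}$ with $b_2-b_1<\frac12+$ in the regime actually used (the paper gets $T^{\frac12-b-}$), and this exponent has no useful dependence on $s$. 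Direct balancing $T^\theta R^3\les R$ would require $\theta=\frac{2s+1}{2}$ to produce the claimed rate, which exceeds the range permitted by \eqref{eq:xs3} already for $s>\frac12$. The paper instead obtains the rate by a scaling argument: $u^\lambda(x,t)=\frac1\lambda u(\frac x\lambda,\frac t{\lambda^2})$ solves the rescaled problem on $[0,1]$, and $\|g^\lambda\|_{H^s(\R^+)}\leq \|g\|_{L^2}+\lambda^{-\frac12-s}\|g\|_{\dot H^s}$; choosing $\lambda^{\frac12+s}\approx\|g\|_{H^s(\R^+)}$ and undoing the scaling gives $T=\lambda^{-2}\approx[C+\|g\|_{H^s(\R^+)}]^{-\frac4{2s+1}}$. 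Without this (or an equivalent device) you do not prove the stated dependence of $T$ on the data.

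Second, independence of the restriction to $\R^+$ from the extension $g_e$. Your argument---two extensions differ by a function vanishing on $\R^+$, so ``the corresponding linear evolutions agree on $\R^+$, and this propagates through the fixed-point iteration''---is false as stated: $W_\R$ is nonlocal, so $W_\R g_{e1}$ and $W_\R g_{e2}$ do \emph{not} agree on $\R^+$ for $t>0$. What is extension-independent is the full linear combination $W_0^t(g,h)=W_\R(t)g_e+W_0^t(0,h-p)$ (Remark~\ref{rmk:Hslinbound} concerns $W_0^t(g,0)$, built by odd extension), but in the nonlinear iteration the cubic term $|u|^2u$ is evaluated on all of $\R$, and its values on $\R^-$, which do depend on the extension, feed back into $\R^+$ through the nonlocal Duhamel propagator. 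The fixed-point argument only gives uniqueness of solutions of \eqref{eq:gamma} for a \emph{fixed} extension. The paper resolves this separately in Section~\ref{sec:unique}: for $s>\frac12$ by appealing to uniqueness of mild solutions from \cite{holmer}, and for $s\in(0,\frac12)$ by approximating with $H^2$ data via Lemma~\ref{lem:ext} and passing to the limit. Some additional bookkeeping slips (the $X^{s,b}$ bound on the Duhamel term uses Proposition~\ref{prop:smooth}, not Proposition~\ref{prop:duhamelkato}; the bound on $q$ uses Proposition~\ref{prop:duhamelkato} together with Proposition~\ref{prop:smooth2}, not Lemma~\ref{lem:kato}) are minor by comparison but should be corrected.
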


\section{A priori estimates} \label{sec:apriori}

\subsection{Estimates for linear terms}

We start with the following well known Kato smoothing estimate converting space derivatives to time derivatives. This estimate justifies the choice of spaces concerning $g$, $h$ in \eqref{nls}.  We supply a proof for completeness. 

\begin{lemma}\label{lem:kato}(Kato smoothing inequality) Fix $s\geq 0$. For any $g\in H^s(\R)$, we have
$\eta(t) W_\R g\in C^0_xH^{\frac{2s+1}4}_t(\R\times \R)$, and we have
$$
\big\|\eta  W_\R g \big\|_{L^\infty_x H^{\frac{2s+1}{4}}_t }\les \|g\|_{H^s(\R)}.
$$
\end{lemma}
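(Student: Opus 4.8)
The plan is to prove the Kato smoothing inequality
$$
\big\|\eta\, W_\R g \big\|_{L^\infty_x H^{\frac{2s+1}{4}}_t }\les \|g\|_{H^s(\R)}
$$
by passing to the Fourier side in both variables and exploiting the explicit form of the free propagator. First I would write, for fixed $x$,
$$
\eta(t) W_\R g(x,t)=\eta(t)\,\mathcal F^{-1}_\xi\big[e^{-it\xi^2}\widehat g(\xi)\big](x)
=\frac{\eta(t)}{2\pi}\int_\R e^{ix\xi-it\xi^2}\widehat g(\xi)\,d\xi.
$$
The $H^{\frac{2s+1}{4}}_t$ norm is controlled by the time-frequency weight $\la\tau\ra^{\frac{2s+1}{4}}$, so I would compute the time Fourier transform. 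The key step is the change of variables $\tau=-\xi^2$ (equivalently $\xi=\pm\sqrt{-\tau}$), which turns the oscillatory phase $e^{-it\xi^2}$ into the dual time variable and introduces a Jacobian factor $\tfrac{1}{|\xi|}=\tfrac{1}{\sqrt{|\tau|}}$, splitting the $\xi$-integral into its positive and negative halves.

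Next, because the cutoff $\eta(t)$ is present, the time Fourier transform is a convolution: $\mathcal F_t[\eta(t)e^{-it\xi^2}](\tau)=\widehat\eta(\tau+\xi^2)$, which is Schwartz and concentrated near $\tau=-\xi^2$. So after the change of variables I expect the $L^2_\tau$ norm (with weight $\la\tau\ra^{\frac{2s+1}{4}}$) of $\eta\,W_\R g(x,\cdot)$ to be bounded, uniformly in $x$, by a quantity of the form
$$
\Big(\int_\R \la\tau\ra^{\frac{2s+1}{2}} \frac{|\widehat g(\sqrt{|\tau|})|^2+|\widehat g(-\sqrt{|\tau|})|^2}{|\tau|}\,d\tau\Big)^{1/2},
$$
up to the smoothing effect of the $\widehat\eta$ convolution. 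Undoing the substitution $\tau=-\xi^2$ (so $d\tau=2|\xi|\,d\xi$ and $\la\tau\ra=\la\xi^2\ra\approx\la\xi\ra^2$) converts the weight $\la\tau\ra^{\frac{2s+1}{2}}/|\tau|\cdot|\xi|\,d\xi$ into exactly $\la\xi\ra^{2s}\,d\xi$, which reproduces $\|g\|_{H^s(\R)}^2$. The uniformity in $x$ comes for free since $|e^{ix\xi}|=1$, so the modulus of the integrand does not see $x$; this is precisely why the estimate lands in $L^\infty_x$ rather than merely $L^2_x$.

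The main obstacle I anticipate is the low-frequency region $|\xi|\ll 1$, where the Jacobian $\tfrac{1}{|\xi|}$ is singular. There the substitution $\tau=-\xi^2$ is not bi-Lipschitz and one must check that the weight $\la\tau\ra^{\frac{2s+1}{4}}\approx 1$ does not amplify the apparent singularity. I would handle this by a Littlewood--Paley-type splitting into $|\xi|\lesssim 1$ and $|\xi|\gtrsim 1$: on the high-frequency piece the change of variables is clean and gives the $\la\xi\ra^{2s}$ weight directly, while on the low-frequency piece the phase $\xi^2$ varies slowly, so the convolution with the Schwartz function $\widehat\eta$ smooths out the integrable $|\xi|^{-1}$ singularity and one bounds that contribution by $\|g\|_{L^2}^2$, which is dominated by $\|g\|_{H^s}^2$ for $s\ge 0$. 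Finally, to upgrade the uniform $L^\infty_x$ bound to continuity in $x$ (the $C^0_x$ claim), I would invoke dominated convergence on the Fourier representation: the integrand depends continuously on $x$ through $e^{ix\xi}$ and is dominated by an $x$-independent $L^2_\tau$ bound, giving continuity of the map $x\mapsto \eta\,W_\R g(x,\cdot)\in H^{\frac{2s+1}{4}}_t$.
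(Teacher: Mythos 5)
Your proposal is correct and follows essentially the same route as the paper's proof: take the Fourier transform in time to get $\widehat\eta(\tau+\xi^2)$, split into $|\xi|\lesssim 1$ (where the $L^2$ bound suffices) and $|\xi|\gtrsim 1$ (where the change of variables $\rho=\xi^2$ plus the concentration of the Schwartz function $\widehat\eta$ near $\tau=-\rho$, made precise via Young's inequality, produces the $\la\xi\ra^{2s}$ weight), with uniformity in $x$ from $|e^{ix\xi}|=1$ and continuity from dominated convergence. No gaps.
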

\begin{proof}
Note that
\begin{multline*}
\mathcal F_t \big(  \eta  W_\R g \big)(\tau)=\int \widehat\eta(\tau+\xi^2) e^{ix\xi} \widehat{g}(\xi) d\xi \\ =\int_{|\xi|<1} \widehat\eta(\tau+\xi^2) e^{ix\xi} \widehat{g}(\xi) d\xi +\int_{|\xi|\geq 1} \widehat\eta(\tau+\xi^2) e^{ix\xi} \widehat{g}(\xi) d\xi.
\end{multline*}
We estimate the contribution of the first term to  $H^{\frac{2s+1}{4}}_t$ norm by
$$
\int_{|\xi|<1} \big\|\la\tau\ra^{\frac{2s+1}{4}}\widehat\eta(\tau+\xi^2)\big\|_{L^2_\tau} |\widehat{g}(\xi)| d\xi \les \int_{|\xi|<1} |\widehat{g}(\xi)| d\xi
\les \|\widehat g\|_{L^2}\les \|g\|_{H^s}.
$$
By a change of variable, the contribution of the second term is bounded by
$$ \Big\|\int_1^\infty  \la \tau\ra^{\frac{2s+1}{4}} |\widehat\eta(\tau+\rho)| \frac{|\widehat{g}(\pm\sqrt{\rho})|}{\sqrt\rho} d\rho\Big\|_{L^2_\tau}
\les \Big\|\int_1^\infty  \la \tau+\rho\ra^{\frac{2s+1}{4}}|\widehat\eta(\tau+\rho)| \rho^{\frac{2s+1}{4}}\frac{|\widehat{g}(\pm\sqrt{\rho})|}{\sqrt\rho} d\rho\Big\|_{L^2_\tau} .
$$
By Young's inequality, we estimate this by
$$
\|\la\cdot\ra^{\frac{2s+1}{4}} \widehat{\eta} \|_{L^1} \Big\|\rho^{\frac{2s+1}{4}}\frac{\widehat{g}(\pm\sqrt{\rho})}{\sqrt\rho}\Big\|_{L^2_{\rho>1}}\les \|g\|_{H^s}.
$$
The continuity statement follows from this and the dominated convergence theorem.
\end{proof}
Lemma~\ref{lem:wbcont} and Proposition~\ref{prop:wbh} below show that the boundary operator belongs to the space \eqref{eq:space}.  
\begin{lemma}\label{lem:wbcont}  Let  $s\geq 0 $. Then for  $h$ satisfying  $\chi_{(0,\infty)}h\in H^{\frac{2s+1}{4}}(\R )$,   we have
$W_0^t(0, h) \in C^0_tH^s_x(\R\times \R)$, and $\eta(t)W_0^t(0, h) \in C^0_xH^{\frac{2s+1}4}_t(\R\times \R)$.
\end{lemma}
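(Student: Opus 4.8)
The plan is to prove the two regularity statements for $W_0^t(0,h) = W_1 h + W_2 h$ (with $W_2$ extended to all $x$ via \eqref{eq:w2}) separately for each piece, using the explicit oscillatory representations \eqref{eq:w1} and \eqref{eq:w2} together with the control on the weighted measure afforded by \eqref{eq:psiineq}. The central quantitative input is that, writing $\psi_\pm(\beta) = \beta\,\widehat h(\pm\beta^2)$ on $\beta>0$, inequality \eqref{eq:psiineq} gives $\|\langle\beta\rangle^s \psi_\pm\|_{L^2(0,\infty)} \lesssim \|h\|_{H^{(2s+1)/4}(\R^+)} < \infty$. I would first handle the spatial regularity $W_0^t(0,h)\in C^0_t H^s_x$, then the temporal regularity $\eta(t) W_0^t(0,h) \in C^0_x H^{(2s+1)/4}_t$, and in each case dispose of the continuity statement at the end by dominated convergence once the norm bounds are in place.

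For the spatial estimate, consider $W_1 h$ first. After the substitution $\xi = \beta$, $W_1 h(x,t) = \frac1\pi \int_0^\infty e^{-i\beta^2 t} e^{i\beta x}\psi_-(\beta)\,d\beta$, so at fixed $t$ this is (up to the harmless factor $e^{-i\beta^2 t}$ of modulus one) the inverse Fourier transform in $x$ of a function supported on $\beta>0$ whose $H^s_x$ norm is exactly $\|\langle\beta\rangle^s \psi_-\|_{L^2(0,\infty)}$, which \eqref{eq:psiineq} bounds by $\|h\|_{H^{(2s+1)/4}(\R^+)}$. For $W_2 h$ with the cutoff $\rho(\beta x)$, the factor $e^{-\beta x}$ produces genuine smoothing: I would compute the $x$-Fourier transform of $e^{-\beta x}\rho(\beta x)$ and show that $\|W_2 h(\cdot,t)\|_{H^s_x}$ is again controlled by $\|\langle\beta\rangle^s\psi_+\|_{L^2}$, exploiting that $\int_0^\infty \langle\xi\rangle^{2s}|\widehat{\rho_\beta}(\xi)|^2\,d\xi \lesssim \langle\beta\rangle^{2s}$ after scaling (the exponential decay on $x>0$ and the Schwartz tail of $\rho$ on $x\in(-2,0)$ keep every scale-$\beta$ contribution at size $\langle\beta\rangle^s$). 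Both bounds are uniform in $t$, which is what $C^0_t H^s_x$ requires once continuity is checked.

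For the temporal estimate, I would proceed as in the proof of Lemma~\ref{lem:kato}: change variables $\rho = \beta^2$ to rewrite the $\beta$-integrals as $\tau$-convolutions against $\widehat\eta$. For $W_1 h$ one has $\mathcal F_t(\eta(t) W_1 h)(x,\tau) = \frac1\pi \int_0^\infty \widehat\eta(\tau+\beta^2)e^{i\beta x}\psi_-(\beta)\,d\beta$, and applying the $\langle\tau\rangle^{(2s+1)/4}$ weight, using $\langle\tau\rangle^{(2s+1)/4}|\widehat\eta(\tau+\beta^2)| \lesssim \langle\tau+\beta^2\rangle^{(2s+1)/4}|\widehat\eta(\tau+\beta^2)|\,\langle\beta\rangle^s$ (since $(2s+1)/4$ converts into $\beta^{2\cdot(2s+1)/4}=\beta^{s+1/2}$ after the Jacobian $d\rho = 2\beta\,d\beta$ is accounted for), Young's inequality in $\tau$ against $\|\langle\cdot\rangle^{(2s+1)/4}\widehat\eta\|_{L^1}$ reduces the $L^2_\tau$ norm to $\|\langle\beta\rangle^s \psi_-\|_{L^2}$, uniformly in $x$. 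The same scheme applies to $W_2 h$, where the extra $e^{-\beta x}\rho(\beta x)$ factor is bounded in modulus by a constant on the relevant region and only helps.

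I expect the main obstacle to be the treatment of $W_2 h$ in the spatial estimate: because of the extended cutoff $\rho(\beta x)$ on $(-2,\infty)$, the function $W_2 h(\cdot,t)$ no longer has a clean half-line Fourier support, so I cannot simply read off the $H^s_x$ norm as in the $W_1$ case. The careful point is to verify that the scaling behavior of $\widehat{\rho_\beta}$ in the weighted $L^2_\xi$ norm really does reproduce the factor $\langle\beta\rangle^{2s}$ rather than anything larger, so that after integrating in $\beta$ against $\psi_+$ (via Minkowski's inequality, treating $W_2 h$ as a superposition over $\beta$) one recovers exactly $\|\langle\beta\rangle^s\psi_+\|_{L^2}\lesssim \|h\|_{H^{(2s+1)/4}(\R^+)}$ and no loss. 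Once both norm bounds are established, continuity in $t$ (for the $H^s_x$ statement) and in $x$ (for the $H^{(2s+1)/4}_t$ statement) follows from the dominated convergence theorem applied to the dominating integrable majorants produced along the way, exactly as at the end of the proof of Lemma~\ref{lem:kato}.
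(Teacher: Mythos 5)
Your handling of $W_1h$ (recognizing it as $W_\R\psi$ with $\widehat\psi(\beta)=\beta\widehat h(-\beta^2)\chi_{(0,\infty)}(\beta)$ and invoking \eqref{eq:psiineq} plus Kato smoothing) and your temporal estimate for $W_2h$ (pulling absolute values inside, using $|e^{-\beta x}\rho(\beta x)|\lesssim 1$ uniformly, and rerunning the Lemma~\ref{lem:kato} computation) are correct and essentially match the paper. The gap is in the spatial estimate for $W_2h$, precisely at the point you yourself flagged as the main obstacle. Writing $f(y)=e^{-y}\rho(y)$, the correct scaling of the building block is
$\|f(\beta\,\cdot)\|_{H^s_x}^2=\beta^{-1}\int\langle\beta\eta\rangle^{2s}|\widehat f(\eta)|^2\,d\eta\lesssim \beta^{-1}\langle\beta\rangle^{2s}$,
not $\langle\beta\rangle^{2s}$: the Jacobian contributes an extra $\beta^{-1}$. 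Feeding the corrected bound into your Minkowski superposition gives
$\|W_2h(\cdot,t)\|_{H^s_x}\lesssim\int_0^\infty \beta^{-1/2}\langle\beta\rangle^{s}\,\beta|\widehat h(\beta^2)|\,d\beta$,
and this cannot be closed by the available information $\|\langle\beta\rangle^{s}\beta\widehat h(\beta^2)\|_{L^2_\beta}<\infty$: Cauchy--Schwarz would need $\beta^{-1/2}\in L^2$, which fails logarithmically as $\beta\to\infty$ (the explicit factor $\beta$ in $\beta\widehat h(\beta^2)$ rescues the endpoint $\beta\to 0$, but nothing rescues high frequencies, and one can cook up admissible weights with an extra $(\log\beta)^{-3/4}$ showing the divergence is real, not an artifact of Cauchy--Schwarz). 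So the ``superposition over $\beta$'' scheme loses a logarithm and does not prove the claim.

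The paper closes this by a different order of operations. It first reduces to the $L^2$ case (differentiate $s$ times, which converts $f$ into $f^{(s)}$ and inserts $\beta^s$, then interpolate), arriving at the operator $Tg(x)=\int_\R f(\beta x)\widehat g(\beta)\,d\beta$, and only \emph{then} changes variables $\beta x\to\beta$ so that $Tg(x)=\int_\R f(\beta)\,x^{-1}\widehat g(\beta x^{-1})\,d\beta$. Applying Minkowski in this representation puts the singular weight on the fixed Schwartz profile rather than on the datum: one has the exact identity $\|x^{-1}\widehat g(\beta x^{-1})\|_{L^2_x}=|\beta|^{-1/2}\|g\|_{L^2}$, and $\int_\R |f(\beta)|\,|\beta|^{-1/2}\,d\beta<\infty$ because $f$ is Schwartz. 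The full $L^2$ mass of $g$ is retained with no loss in $\beta$, which is exactly what your version forfeits. To repair your argument you would need to adopt this change of variables (or an equivalent device that avoids integrating $|\widehat g|$ against a non-square-integrable weight); the rest of your outline, including the dominated-convergence continuity step, then goes through as written.
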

\begin{proof} We start with the claim $W_2h \in C^0_tH^s_x(\R\times \R)$. Let $f(x)=e^{-x}\rho(x)$. Note that $f$ is a Schwartz function. Recalling \eqref{eq:w2},  we have
$$
W_2h=\int_0^\infty f(\beta x) e^{i\beta^2 t} \beta \widehat h(\beta^2) d\beta = \int_\R f(\beta x) \mathcal F\big(e^{-it \Delta } \psi\big)(\beta) d\beta,
$$
where
$$
\widehat \psi(\beta)=\beta \widehat h(\beta^2) \chi_{[0,\infty)}(\beta).
$$
Note that by \eqref{eq:halffourier} and  \eqref{eq:psiineq},  $\|\psi\|_{H^s}\les \|\chi_{(0,\infty)} h\|_{H^{\frac{2s+1}{4}}(\R)}$.   Using this and the continuity of $e^{-it \Delta }$ in $H^s$, it suffices to prove that
$$
Tg(x):= \int_\R f(\beta x) \widehat{g}(\beta)d\beta
$$
is bounded in $H^s$ for  $s\geq 0$. This follows from the case $s=0$ noting that
$$
\partial_x^s Tg(x)=\int_\R f^{(s)}(\beta x) \beta^s \widehat{g}(\beta)d\beta,\,\,\,\,\,s\in\N,
$$
and by interpolation.
For $s=0$, after the change of variable $\beta x\to \beta$, we have
$$
Tg(x)=\int_\R f(\beta ) x^{-1} \widehat{g}(\beta x^{-1})d\beta.
$$
Therefore,
$$
\|Tg\|_{L^2}\leq \int_\R |f(\beta )| \big\|   x^{-1} \widehat{g}(\beta x^{-1}) \big\|_{L^2_x} d\beta.
$$
Noting that
$$
\big\| x^{-1} \widehat{g}(\beta x^{-1})  \big\|_{L^2_x}^2=\int_\R x^{-2} |\widehat{g}(\beta x^{-1})|^2 dx = \int_\R \beta^{-1}   |\widehat{g}(y)|^2 dy = \beta^{-1}\|g\|_{L^2}^2,
$$
we obtain
$$
\|Tg\|_{L^2}\leq \|g\|_{L^2} \int_\R |f(\beta)| \frac{d\beta}{\sqrt{\beta}} \les \|g\|_{L^2},
$$
since $f\in\mathcal S$. This proves that $W_2h \in C^0_tH^s_x(\R\times \R)$.

To prove that $\eta(t)W_2h \in C^0_xH^{\frac{2s+1}4}_t(\R\times \R)$, write
\begin{align*}
W_2h  = \int_\R f(\beta x) \mathcal F\big(e^{-it \Delta } \psi\big)(\beta) d\beta = \int_\R \frac1x \widehat f(\xi/x)  (e^{-it \Delta } \psi) (\xi) d\xi
=  \int_\R   \widehat f(\xi )  (e^{-it \Delta } \psi) (x \xi) d\xi.
\end{align*}
The claim follows from the using Kato smoothing and dominated convergence theorem noting that $\widehat f \in L^1$.

Finally, note that 
\be\label{eq:w1psi}
W_1h=W_\R \psi,
\ee 
where
$$
\widehat \psi(\beta)=\beta \widehat h(-\beta^2) \chi_{[0,\infty)}(\beta).
$$
The claim follows  as above from \eqref{eq:halffourier}, \eqref{eq:psiineq}, the continuity of $W_\R(t)$, and Kato smoothing Lemma~\ref{lem:kato}.
\end{proof}

\begin{proposition} \label{prop:wbh} Let $b\leq \frac12$ and $s\geq 0 $. Then for  $h$ satisfying  $\chi_{(0,\infty)}h\in H^{\frac{2s+1}{4}}(\R )$,   we have
$$\|\eta(t) W_0^t(0,h) \|_{X^{s,b}} \les \|\chi_{(0,\infty)}h\|_{H_t^{\frac{2s+1}4}(\R )}.$$
\end{proposition}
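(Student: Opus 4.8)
The plan is first to reduce to the endpoint $b=\tfrac12$: since $\langle\tau+\xi^2\rangle\geq1$, we have $\langle\tau+\xi^2\rangle^{b}\leq\langle\tau+\xi^2\rangle^{1/2}$ for every $b\leq\tfrac12$, so it suffices to prove the bound with $b=\tfrac12$. Next, recall $W_0^t(0,h)=W_1h+W_2h$. The term $W_1h$ is free: by \eqref{eq:w1psi} we have $\eta(t)W_1h=\eta(t)W_\R\psi$ with $\widehat\psi(\beta)=\beta\widehat h(-\beta^2)\chi_{[0,\infty)}(\beta)$, so \eqref{eq:xs1} together with \eqref{eq:psiineq} gives $\|\eta W_1h\|_{X^{s,b}}\les\|\psi\|_{H^s}\les\|\chi_{(0,\infty)}h\|_{H^{(2s+1)/4}(\R)}$. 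Thus everything comes down to $W_2h$. Writing $f(y)=e^{-y}\rho(y)$ (a Schwartz function, as in the proof of Lemma~\ref{lem:wbcont}) and $G(\beta)=\beta\widehat h(\beta^2)\chi_{[0,\infty)}(\beta)$, formula \eqref{eq:w2} reads $\eta(t)W_2h=\tfrac1\pi\eta(t)\int_0^\infty e^{i\beta^2 t}f(\beta x)G(\beta)\,d\beta$, and a direct computation of the space-time Fourier transform yields
$$
\mathcal F_{x,t}\big(\eta W_2h\big)(\xi,\tau)=\frac1\pi\int_0^\infty \widehat\eta(\tau-\beta^2)\,\frac1\beta\,\widehat f(\xi/\beta)\,G(\beta)\,d\beta .
$$
The two structural features I would exploit are the modulation localization $\tau\approx\beta^2$ from the Schwartz factor $\widehat\eta(\tau-\beta^2)$ and the spatial localization $|\xi|\les\beta$ from $\widehat f(\xi/\beta)$. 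I would then split the $\beta$-integral as $G=G\chi_{\{\beta>1\}}+G\chi_{\{\beta\le1\}}$ and estimate the two resulting pieces of $\eta W_2h$ by different means.

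\textbf{High piece ($\beta>1$).} Here I would use the $X^{s,b}$ structure honestly, after substituting $\mu=\beta^2$ so that $\widehat\eta(\tau-\mu)$ becomes a genuine convolution kernel in $\tau$. Writing $\langle\tau+\xi^2\rangle^{1/2}\les\langle\tau-\mu\rangle^{1/2}\langle\mu+\xi^2\rangle^{1/2}$ and absorbing $\langle\tau-\mu\rangle^{1/2}\widehat\eta(\tau-\mu)$ into an $L^1_\tau$ (still Schwartz) factor, Young's inequality $L^1_\tau\ast L^2_\tau\to L^2_\tau$ in the modulation variable reduces the square of the $X^{s,1/2}$ norm to
$$
\int_\R\int_{\mu>1}\langle\xi\rangle^{2s}\,\langle\mu+\xi^2\rangle\,\frac1{\mu^2}\,\big|\widehat f(\xi/\sqrt\mu)\big|^2\,\big|G(\sqrt\mu)\big|^2\,d\mu\,d\xi .
$$
Substituting $\xi=\sqrt\mu\,w$ and using $\langle\sqrt\mu\,w\rangle^{2s}\langle\mu(1+w^2)\rangle\les\langle\sqrt\mu\rangle^{2s}\langle\mu\rangle\langle w\rangle^{2s+2}$ together with the rapid decay of $\widehat f$ (so that $\int\langle w\rangle^{2s+2}|\widehat f(w)|^2\,dw<\infty$), the $\mu$-integrand collapses to $\les\langle\sqrt\mu\rangle^{2s}\mu^{-1/2}|G(\sqrt\mu)|^2$ on $\mu>1$. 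Changing back to $\beta$ this is $\les\langle\beta\rangle^{2s}|G(\beta)|^2$, so the high piece is bounded by $\|\psi\|_{H^s}^2\les\|\chi_{(0,\infty)}h\|^2_{H^{(2s+1)/4}(\R)}$ via \eqref{eq:psiineq}.

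\textbf{Low piece ($\beta\le1$).} Here I would instead apply Minkowski's integral inequality directly to the $X^{s,1/2}$ norm, bounding $\|\eta W_2[G\chi_{\{\beta\le1\}}]\|_{X^{s,1/2}}$ by
$$
\frac1\pi\int_0^1\Big\|\langle\xi\rangle^{s}\langle\tau+\xi^2\rangle^{1/2}\,\widehat\eta(\tau-\beta^2)\,\tfrac1\beta\,\widehat f(\xi/\beta)\Big\|_{L^2_{\xi,\tau}}\,|G(\beta)|\,d\beta .
$$
For fixed $\beta\le1$ the $L^2_\tau$ norm costs a factor $\langle\beta^2+\xi^2\rangle^{1/2}\les\langle\xi\rangle$, and after $\xi=\beta w$ the Jacobian produces $\beta^{1/2}$ while $\langle\beta w\rangle\le\langle w\rangle$; hence the inner norm is $\les\beta^{-1/2}$. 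The contribution is then controlled by $\int_0^1\beta^{-1/2}|G(\beta)|\,d\beta=\int_0^1\beta^{1/2}|\widehat h(\beta^2)|\,d\beta$, and the change of variables $\mu=\beta^2$ followed by Cauchy–Schwarz against the \emph{integrable} singularity $\mu^{-1/2}$ bounds this by $\|\widehat h\|_{L^2(0,1)}\les\|\chi_{(0,\infty)}h\|_{L^2}\les\|\chi_{(0,\infty)}h\|_{H^{(2s+1)/4}(\R)}$.

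\textbf{Main obstacle.} The delicate point is precisely the low-frequency regime $\beta\to0$. If one tries to run the square-and-integrate (effectively Hilbert–Schmidt) estimate used for the high piece all the way down to $\beta=0$, the $\mu$-integrand acquires a factor $\mu^{-1}\langle\mu\rangle$, which is harmless at high frequency but produces a non-integrable $\mu^{-1}$ near $\mu=0$; this reflects that $G\mapsto\eta W_2h$ is simply not $L^2$-bounded near zero frequency by that crude device. The resolution is that Minkowski's inequality keeps the $L^2_{\xi,\tau}$ norm intact rather than squaring under the $\beta$-integral, so it only sees the square-root singularity $\mu^{-1/2}$, which is integrable and absorbable into $\|\chi_{(0,\infty)}h\|_{L^2}$. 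Conversely, Minkowski cannot be used for the high piece, where it would overcount the weight and lose a power of $\beta$; there it is genuinely the orthogonality in the modulation variable $\tau-\beta^2$ (encoded through the substitution $\mu=\beta^2$, Young's inequality, and the Schwartz decay of $\widehat\eta$) that prevents any loss. Reconciling these two mechanisms across the dyadic threshold $\beta=1$ is the crux of the argument.
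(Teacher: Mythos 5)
Your proof is correct. The overall architecture coincides with the paper's: split $W_0^t(0,h)=W_1h+W_2h$, dispose of $W_1h$ via \eqref{eq:w1psi}, \eqref{eq:xs1} and \eqref{eq:psiineq}, compute $\widehat{\eta W_2h}(\xi,\tau)=\int_0^\infty\widehat\eta(\tau-\beta^2)\widehat f(\xi/\beta)\widehat h(\beta^2)\,d\beta$, and then exploit the modulation localization $\tau\approx\beta^2$ through Young's inequality in $\tau$ for the high frequencies and a Minkowski--Cauchy--Schwarz argument near $\beta=0$. Where you genuinely deviate is in the reductions and bookkeeping. The paper first reduces to integer $s$ by interpolation and then to $s=0$, $b=\tfrac12$ via the identity $\partial_x^s\,\eta W_2h=\eta\int_0^\infty e^{i\beta^2t}f^{(s)}(\beta x)\beta^{s+1}\widehat h(\beta^2)\,d\beta$; you instead carry the weight $\langle\xi\rangle^s$ for arbitrary $s\geq0$ directly, using the scaling substitutions $\xi=\sqrt\mu\,w$, $\xi=\beta w$ and the full Schwartz decay of $\widehat f$, which removes the interpolation step entirely. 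Your splitting is also in $\beta$ alone ($\beta\gtrless1$) rather than the paper's $\xi^2+\beta^2\gtrless1$, and on the high piece you run an $L^2_\mu$ estimate after Young in $\tau$ where the paper first uses the pointwise bound $|\widehat f(\xi/\beta)|\lesssim\beta^2/(\beta^2+\xi^2)$ and Minkowski in $\xi$. The two routes are of comparable length; yours has the mild advantage of treating all $s\geq0$ uniformly without interpolation, while the paper's pointwise kernel bound makes the $\xi$-integration more transparent. Your closing discussion of why the squared (Hilbert--Schmidt type) estimate must fail as $\beta\to0$, and why Minkowski rescues exactly the $\mu^{-1/2}$ singularity there, is an accurate diagnosis of the same low-frequency issue the paper handles in its $\xi^2+\beta^2\leq1$ case.
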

 \begin{proof}  
As before, define $\psi$ as
$$
\widehat \psi(\beta)= \beta \widehat  h(-\beta^2) \chi_{(0,\infty)}(\beta).
$$
Using \eqref{eq:w1psi}, \eqref{eq:xs1}, \eqref{eq:halffourier}, and \eqref{eq:psiineq}, we have
$$
\|\eta W_{1}h\|_{X^{s,b}}= \| \eta W_\R(t)\psi \|_{X^{s,b}}  \les \| \psi \|_{H^s} \les \|\chi_{(0,\infty)}h\|_{H_t^{\frac{2s+1}4}(\R )}.
$$

For $W_2$,  by interpolation, it suffices to prove the statement for $s=0,1,2,...$. Let $f(x)=e^{-x}\rho(x)$. Note that
$$
\partial^s_x \eta W_{2}h = \eta \int_{0}^\infty e^{i\beta^2 t } f^{(s)}(\beta x)  \beta^{s+1} \widehat h(\beta^2) d\beta.
$$
Therefore, it suffices to prove the inequality for $s=0$ and $b=\frac12$.
We have
$$
\widehat{\eta W_{2}h}(\xi,\tau)=\int_{0}^\infty \widehat\eta(\tau-\beta^2)  \widehat{f}(\xi/\beta) \widehat h(\beta^2) d\beta.
$$
Since $f$ is a Schwartz function, we have
$$
\big|  \widehat{f}(\xi/\beta)\big|\les  \frac1{1+\xi^2/\beta^2}= \frac{\beta^2}{\beta^2+\xi^2}.
$$
Therefore
$$\|\eta W_{2}h\|_{X^{0,\frac12}} \les \Big\|\la \tau+\xi^2\ra^{\frac12} \int_{0}^\infty |\widehat\eta(\tau-\beta^2)| \frac{\beta^2}{\beta^2+\xi^2}| \widehat h(\beta^2)| d\beta \Big\|_{L^2_\xi L^2_\tau}.
$$
We divide this integral into pieces $\xi^2+\beta^2>1$ and  $\xi^2+\beta^2\leq 1$.
In the former case using $|\widehat\eta(\tau-\beta^2)|\les \la \tau-\beta^2\ra^{-3}$, $\la \tau+\xi^2\ra\les \la \tau-\beta^2\ra  \la \beta^2+\xi^2\ra$, and $\beta^2+\xi^2 \sim \la \beta^2+\xi^2 \ra$, we have the bound
$$
\Big\|  \int_{0}^\infty \la\tau-\beta^2\ra^{-2}  \frac{\beta^2}{( \beta^2+\xi^2)^{\frac12}} |\widehat h(\beta^2)| d\beta \Big\|_{L^2_\xi L^2_\tau }.
$$
Using Minkowski's and Young's inequalities,   we have
\begin{multline*}
\les \Big\|  \int_{0}^\infty \la\tau-\beta^2\ra^{-2} \Big\| \frac{\beta^2}{( \beta^2+\xi^2)^{\frac12}} \Big\|_{L^2_\xi } |\widehat h(\beta^2)| d\beta \Big\|_{  L^2_\tau }\les \Big\|  \int_{0}^\infty \la\tau-\beta^2\ra^{-2} \beta^{\frac32} |\widehat h(\beta^2)| d\beta \Big\|_{  L^2_\tau }\\
\les \Big\|\int_{0}^\infty \la\tau-\rho\ra^{-2} \rho^{ \frac14} |\widehat h( \rho )| d\rho\Big\|_{  L^2_\tau }\les \|\la\cdot\ra^{-2}\|_{L^1} \big\|\rho^{  \frac14}\widehat{h}(\rho)\big\|_{L^2_\rho}\les \|\chi_{(0,\infty)} h\|_{H^{\frac14}(\R)}.
\end{multline*}
In the latter case, we have the bound
$$
\Big\|\la \tau \ra^{\frac12} \int_{0}^1 \la \tau \ra^{-3}  \frac{\beta^2}{\beta^2+\xi^2}|\widehat h(\beta^2)| d\beta \Big\|_{L^2_{|\xi|\leq1} L^2_\tau}.
$$
Using Minkowski's inequality for both $L^2$ norms we have
\begin{multline*}
\les \int_{0}^1    \Big\|\frac{\beta^2}{\beta^2+\xi^2} \Big\|_{L^2_{|\xi|\leq1} } |\widehat h(\beta^2)| d\beta \les \int_0^1 \beta^{ \frac12} |\widehat h(\beta^2)|d\beta \\ \les \int_0^1 \rho^{ -\frac14} |\widehat h(\rho)|d\rho \les \|\chi_{(0,\infty)} h\|_{L^2(\R)}\leq \|\chi_{(0,\infty)} h\|_{H^{\frac14}(\R)}.
\end{multline*}
In the second to last bound we used Cauchy-Schwarz inequality.
\end{proof}

\subsection{Estimates for the nonlinear term}
In this section we establish estimates for the nonlinear term in \eqref{eq:duhamel} in order to close the fixed point argument and to obtain the smoothing theorem. 
 \begin{proposition}\label{prop:duhamelkato}  For any smooth compactly supported function $\eta$, we have
\begin{align*}
\Big\|\eta  \int_0^tW_\R(t- t^\prime) F  dt^\prime  \Big\|_{C^0_xH^{\frac{2s+1}{4}}_t(\R\times \R)}\les \left\{ \begin{array}{ll} \|F\|_{X^{s,-b}}& \text{ for } 0\leq s \leq \frac12,  b<\frac12, \\
\|F\|_{X^{\frac12,\frac{2s-1-4b}{4}}} +\|F\|_{X^{s,-b}} & \text{ for }  \frac12 \leq s \leq \frac52, b<\frac12. \end{array}
\right.
\end{align*}
\end{proposition}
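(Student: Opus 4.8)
The plan is to reduce the whole estimate to the Kato smoothing inequality of Lemma~\ref{lem:kato} by exposing the space-time Fourier structure of the Duhamel operator. Writing $u(x,t)=\eta(t)\int_0^t W_\R(t-t')F\,dt'$, taking the spatial Fourier transform, inverting in time in $\widehat{F}^x(\xi,t')$, and using $\int_0^t e^{it'(\tau+\xi^2)}\,dt'=\frac{e^{it(\tau+\xi^2)}-1}{i(\tau+\xi^2)}$, one gets
\[
\widehat{u}^x(\xi,t)=\frac{\eta(t)}{2\pi}\int_\R \widehat{F}(\xi,\tau)\,\frac{e^{it\tau}-e^{-it\xi^2}}{i(\tau+\xi^2)}\,d\tau .
\]
First I would split the $\tau$-integral into the \emph{low modulation} part $|\tau+\xi^2|\le 1$ and the \emph{high modulation} part $|\tau+\xi^2|>1$; in the latter I separate the two exponentials into a free piece carrying $e^{-it\xi^2}$ and a forced piece carrying $e^{it\tau}$.

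For the low modulation part I would Taylor expand $e^{it(\tau+\xi^2)}-1=\sum_{k\ge1}\frac{(it(\tau+\xi^2))^k}{k!}$, which removes the singular denominator and rewrites that contribution as $\sum_{k\ge0}\frac{i^k}{(k+1)!}\,[\eta(t)t^{k+1}]\,W_\R G_k$, where $\widehat{G_k}(\xi)=\int_{|\tau+\xi^2|\le1}(\tau+\xi^2)^k\widehat{F}(\xi,\tau)\,d\tau$. Since $\eta(t)t^{k+1}$ is again smooth and compactly supported, Lemma~\ref{lem:kato} bounds each summand by $\|G_k\|_{H^s}$, and Cauchy--Schwarz in $\tau$ over the unit-length region gives $\|G_k\|_{H^s}\les\|F\|_{X^{s,-b}}$ uniformly in $k$; the constants from differentiating $\widehat\eta$ grow at most geometrically and are absorbed by $1/(k+1)!$. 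The free high-modulation piece equals $\eta(t)W_\R G$ with $\widehat{G}(\xi)=-\frac{1}{2\pi i}\int_{|\tau+\xi^2|>1}\frac{\widehat{F}(\xi,\tau)}{\tau+\xi^2}\,d\tau$; Cauchy--Schwarz in $\tau$ together with $\int_{|\sigma|>1}\langle\sigma\rangle^{2b}\sigma^{-2}\,d\sigma<\infty$ (here $b<\tfrac12$ is used) yields $\|G\|_{H^s}\les\|F\|_{X^{s,-b}}$, and Lemma~\ref{lem:kato} finishes. Both of these contributions are controlled by $\|F\|_{X^{s,-b}}$ for \emph{all} $s$ in range.

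The heart of the argument is the forced piece $v(x,t)=\frac{\eta(t)}{2\pi i}\int_{|\tau+\xi^2|>1}\frac{\widehat{F}(\xi,\tau)}{\tau+\xi^2}e^{it\tau}\,d\tau$ (read in $\widehat{\cdot}^x$, then inverted in $\xi$). Taking the time Fourier transform turns $\eta(t)e^{it\tau}$ into $\widehat\eta(\mu-\tau)$; dropping $|e^{ix\xi}|=1$ makes the bound uniform in $x$, and the Schwartz factor $\langle\mu\rangle^{\frac{2s+1}{4}}|\widehat\eta(\mu-\tau)|\les\langle\tau\rangle^{\frac{2s+1}{4}}\Phi(\mu-\tau)$ with $\Phi\in L^1$ reduces the $H^{\frac{2s+1}{4}}_t$ norm, via Young, to $\|H\|_{L^2_\tau}$ where
\[
H(\tau)=\langle\tau\rangle^{\frac{2s+1}{4}}\int_\R \frac{|\widehat{F}(\xi,\tau)|}{|\tau+\xi^2|}\,\chi_{|\tau+\xi^2|>1}\,d\xi .
\]
I would then apply Cauchy--Schwarz in $\xi$, splitting according to whether frequency or modulation dominates. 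On the frequency-dominant region $\langle\xi\rangle^2\gtrsim\langle\tau+\xi^2\rangle$ I would extract the weight $\langle\xi\rangle^{s}\langle\tau+\xi^2\rangle^{-b}$ (giving $\|F\|_{X^{s,-b}}$) and, using $\langle\tau\rangle\les\langle\xi\rangle^2$ there, reduce to showing $\int \langle\xi\rangle\langle\tau+\xi^2\rangle^{2b-2}\,d\xi\les1$ uniformly in $\tau$. On the modulation-dominant region $\langle\xi\rangle^2\les\langle\tau+\xi^2\rangle$ I would instead extract $\langle\xi\rangle^{1/2}\langle\tau+\xi^2\rangle^{\frac{2s-1}{4}-b}$ (giving exactly $\|F\|_{X^{1/2,\frac{2s-1-4b}{4}}}$) and, using $\langle\tau\rangle\les\langle\tau+\xi^2\rangle$ there, reduce to $\int \langle\tau+\xi^2\rangle^{2b-1}\langle\xi\rangle^{-1}\,d\xi\les1$. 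Both dual kernels are finite precisely because $b<\tfrac12$; for $s\le\tfrac12$ the first weight alone suffices in every region, while the second branch is only needed once $s>\tfrac12$, matching the stated dichotomy (the two norms coincide at $s=\tfrac12$).

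The main obstacle is the uniform-in-$\tau$ boundedness of these two $\xi$-kernels, and specifically the resonant set $\xi^2\approx|\tau|$ when $\tau<0$: there $\tau+\xi^2\approx 2\sqrt{|\tau|}\,(\xi\mp\sqrt{|\tau|})$ is small, and after the change of variables $w=2\sqrt{|\tau|}(\xi\mp\sqrt{|\tau|})$ one finds the contribution is \emph{exactly} of size $O(1)$, balanced against the $\langle\tau\rangle^{\frac{2s+1}{2}}$ weight, with convergence hinging on $\int_{|w|>1}\langle w\rangle^{2b-2}\,dw<\infty$, i.e. on $b<\tfrac12$. Once the $L^\infty_x$ bound is established, the continuity in $x$ (upgrading $L^\infty_x$ to $C^0_x$) follows from the continuity of $e^{ix\xi}$ and dominated convergence, exactly as in Lemma~\ref{lem:kato}.
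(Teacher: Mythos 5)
Your argument is correct, and it shares the paper's skeleton --- the same explicit kernel $\frac{e^{it\tau}-e^{-it\xi^2}}{i(\tau+\xi^2)}$ at fixed $x$, the same three-way split into a Taylor-expanded low-modulation piece, a free piece carrying $e^{-it\xi^2}$, and a forced piece carrying $e^{it\tau}$, and Cauchy--Schwarz against a dual kernel whose finiteness rests on $b<\frac12$ --- but it departs from the paper in two genuinely different and worthwhile ways. First, by expanding around $e^{-it\xi^2}$ rather than $e^{it\tau}$ you exhibit the low-modulation and free pieces as honest free evolutions $[\eta(t)t^{k+1}]W_\R G_k$ and $\eta W_\R G$ and quote Lemma~\ref{lem:kato} as a black box; the paper re-derives these bounds by hand (splitting its term $III$ further into $|\xi|\gtrless 1$), so your organization is cleaner and makes transparent why these pieces are controlled by $\|F\|_{X^{s,-b}}$ for every $s$. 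Second, and more substantively, the paper reaches $\frac12<s<\frac52$ only by proving the endpoint $s=\frac52$ separately --- differentiating the Duhamel term in $t$, using $\|f\|_{H^{3/2}}\les\|f\|_{L^2}+\|f'\|_{\dot H^{1/2}}$, and feeding $\xi^2\widehat F$ and $\la\lambda+\xi^2\ra\widehat F$ back into the $s=\frac12$ case --- and then interpolating; this detour is forced partly because its multiplier step $\|\eta f\|_{H^{\frac{2s+1}{4}}}\les\|\eta\|_{H^1}\|f\|_{H^{\frac{2s+1}{4}}}$ is only valid for $\frac{2s+1}{4}\le 1$. Your Young-convolution version with $\Phi=\la\cdot\ra^{\frac{2s+1}{4}}|\widehat\eta|\in L^1$ works for all $s$, and your region-dependent choice of weight in Cauchy--Schwarz handles the whole range at once: the two dual kernels $\int\la\xi\ra\la\tau+\xi^2\ra^{2b-2}d\xi$ and $\int\la\xi\ra^{-1}\la\tau+\xi^2\ra^{2b-1}d\xi$ are indeed uniformly bounded in $\tau$ (substitute $\rho=\xi^2$ and apply Lemma~\ref{lem:sums}; your resonant-set accounting, with $\sqrt{|\tau|}$ from $\la\xi\ra$ cancelling the Jacobian, is the right mechanism), and the $X^{\frac12,\frac{2s-1-4b}{4}}$ norm emerges intrinsically as what survives on the modulation-dominant set rather than as an artifact of interpolation. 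The only points to write out carefully in a final version are the uniform-in-$k$ constants when Lemma~\ref{lem:kato} is applied with cutoff $\eta(t)t^{k+1}$ (they grow at most like $C^{k}(k+1)^{N}$ and are killed by $1/(k+1)!$, as you note) and the verification that your first weight alone suffices on the modulation-dominant set when $s\le\frac12$, which follows from Lemma~\ref{lem:sums} with exponents $s+\frac12$ and $\frac32-s-2b$.
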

\begin{proof} The proof is based on an argument from \cite{collianderkenig}. 

It suffices to prove the bound above for $\eta D_0\big(\int_0^tW_\R(t- t^\prime) F  dt^\prime \big)$ since $X^{s,b}$ norm is independent of space translation. The continuity in $x$ follows from this by dominated convergence theorem as in the proof of Lemma~\ref{lem:kato}. First we consider the case $0\leq s\leq \frac12$. Note that
$$
D_0\Big(\int_0^tW_\R(t- t^\prime) F  dt^\prime \Big)= \int_\R\int_0^t e^{-i(t-t^\prime)\xi^2}F(\widehat\xi,t^\prime) dt^\prime d\xi.
$$
Using
$$
F(\widehat\xi,t^\prime)=\int_\R e^{it^\prime\lambda}\widehat F(\xi,\lambda) d\lambda,
$$
and
$$
\int_0^t e^{it^\prime(\xi^2+\lambda)}dt^\prime = \frac{e^{it(\xi^2+\lambda)}-1}{i(\lambda+\xi^2)}
$$
we obtain
$$
D_0\Big(\int_0^tW_\R(t- t^\prime) F  dt^\prime \Big) = \int_{\R^2} \frac{e^{it \lambda }-e^{-it\xi^2}}{i(\lambda+\xi^2)} \widehat F(\xi,\lambda) d\xi d\lambda.
$$
Let $\psi$ be a smooth cutoff for $[-1,1]$, and let $\psi^c=1-\psi$. We write
\begin{multline*}
 \eta(t) D_0\Big(\int_0^tW_\R(t- t^\prime) F  dt^\prime \Big)=   \eta(t)  \int_{\R^2} \frac{e^{it \lambda }-e^{-it\xi^2}}{i(\lambda+\xi^2)} \psi(\lambda+\xi^2) \widehat F(\xi,\lambda) d\xi d\lambda \\ +\eta(t) \int_{\R^2} \frac{e^{it \lambda } }{i(\lambda+\xi^2)} \psi^c(\lambda+\xi^2) \widehat F(\xi,\lambda) d\xi d\lambda
-\eta(t) \int_{\R^2} \frac{ e^{-it\xi^2}}{i(\lambda+\xi^2)} \psi^c(\lambda+\xi^2) \widehat F(\xi,\lambda) d\xi d\lambda \\ =:I+II+III.
\end{multline*}
By Taylor expansion, we have
$$
  \frac{e^{it \lambda }-e^{-it\xi^2}}{i(\lambda+\xi^2)} =ie^{it\lambda} \sum_{k=1}^\infty \frac{(-it)^k}{k!} (\lambda+\xi^2)^{k-1}
$$
Therefore, we have
\begin{multline*}
\|I\|_{H^{\frac{2s+1}{4}}(\R)}\les  \sum_{k=1}^\infty  \frac{\| \eta(t)t^k\|_{H^1}}{k!}  \Big\|  \int_{\R^2}  e^{it\lambda} (\lambda+\xi^2)^{k-1}  \psi(\lambda+\xi^2) \widehat F(\xi,\lambda) d\xi d\lambda\Big\|_{H_t^{\frac{2s+1}{4}}(\R)}\\
\les  \sum_{k=1}^\infty  \frac{1}{(k-1)!}  \Big\| \la \lambda\ra^{\frac{2s+1}{4}} \int_{\R }    (\lambda+\xi^2)^{k-1}  \psi(\lambda+\xi^2) \widehat F(\xi,\lambda) d\xi  \Big\|_{L^2_\lambda}\\
\les  \Big\| \la \lambda\ra^{\frac{2s+1}{4}} \int_{\R }       \psi(\lambda+\xi^2) |\widehat F(\xi,\lambda)| d\xi  \Big\|_{L^2_\lambda}.
\end{multline*}
By Cauchy-Schwarz inequality in $\xi$, we estimate this by
\begin{multline*}
\Big[  \int_{\R }  \la \lambda\ra^{\frac{2s+1}{2}} \Big(  \int_{|\lambda+\xi^2|<1} \la \xi\ra^{-2s} d\xi\Big)\Big(  \int_{|\lambda+\xi^2|<1}  \la \xi\ra^{2s} |\widehat F(\xi,\lambda)|^2 d\xi\Big) d\lambda \Big]^{1/2}\\
\les \|F\|_{X^{s,-b}} \sup_\lambda   \Big( \la \lambda\ra^{\frac{2s+1}{2}} \int_{|\lambda+\xi^2|<1} \la \xi\ra^{-2s} d\xi\Big)^{1/2}\les \|F\|_{X^{s,-b}} .
\end{multline*}
 The last inequality follows by a calculation substituting $\rho =\xi^2$.

For the second term, we have
\begin{multline*}
\|II\|_{H^{\frac{2s+1}{4}}(\R)}\les \|\eta\|_{H^1} \Big\| \la \lambda\ra^{\frac{2s+1}{4}}\int_{\R } \frac{1}{ \lambda+\xi^2 } \psi^c(\lambda+\xi^2) \widehat F(\xi,\lambda) d\xi  \Big\|_{L^2_\lambda}\\
\les  \Big\| \la \lambda\ra^{\frac{2s+1}{4}}\int_{\R } \frac{1}{ \la \lambda+\xi^2 \ra } | \widehat F(\xi,\lambda)| d\xi  \Big\|_{L^2_\lambda}.
\end{multline*}
By Cauchy-Schwarz inequality in $\xi$, we estimate this by
\begin{multline*}
\Big[  \int_{\R }  \la \lambda\ra^{\frac{2s+1}{2}} \Big(  \int \frac{1}{\la \lambda+\xi^2 \ra^{2-2b} \la \xi\ra^{2s}} d\xi\Big)\Big(  \int \frac{ \la \xi\ra^{2s}}{\la \lambda+\xi^2 \ra^{2b} } |\widehat F(\xi,\lambda)|^2 d\xi\Big) d\lambda \Big]^{1/2}\\
\les \|F\|_{X^{s,-b}} \sup_\lambda   \Big( \la \lambda\ra^{\frac{2s+1}{2}}  \int \frac{1}{\la \lambda+\xi^2 \ra^{2-2b} \la \xi\ra^{2s}} d\xi\Big)^{1/2}\les \|F\|_{X^{s,-b}} .
\end{multline*}
To obtain the last inequality recall that $s\leq\frac12, b<\frac12$, and  consider the cases $|\xi|<1$ and $|\xi|\geq 1$ separately. In the former case use $\la \lambda+\xi^2 \ra\sim\la\lambda\ra$, and in the latter case use Lemma~\ref{lem:sums} after the change of variable $\rho=\xi^2$.

To estimate $\|III\|_{H^{\frac{2s+1}{4}}(\R)}$, we divide the $\xi$ integral into two pieces, $|\xi|\geq 1$, $|\xi|<1$. We estimate the contribution of the former piece as above (after the change of variable $\rho=\xi^2$):
$$
   \Big\| \la \rho\ra^{\frac{2s+1}{4}}\int_{\R } \frac{1}{ \lambda+\rho } \psi^c(\lambda+\rho) \widehat F(\sqrt{\rho},\lambda) \frac{d\lambda}{\sqrt{\rho}}  \Big\|_{L^2_{|\rho|\geq 1}}
\les  \Big\| \la \rho\ra^{\frac{2s-1}{4}}\int_{\R } \frac{1}{ \la \lambda+\rho \ra } | \widehat F(\sqrt{\rho},\lambda)| \ d\lambda   \Big\|_{L^2_{|\rho|\geq 1}}.
$$
By Cauchy-Schwarz in $\lambda$ integral, and using $b<\frac12$, we bound this by
$$
   \Big[\int_{|\rho|>1}\int_{\R } \frac{ \la \rho\ra^{\frac{2s-1}{2}}}{ \la \lambda+\rho \ra^{2b} } | \widehat F(\sqrt{\rho},\lambda)|^2   d\lambda  d\rho  \Big]^{1/2}\les \|F\|_{X^{s,-b}}.
$$
We estimate the contribution of the latter term by
$$
 \int_{\R^2} \frac{ \|\eta(t) e^{-it\xi^2}\|_{H^{\frac{2s+1}{4}}}\chi_{[-1,1]}(\xi)}{|\lambda+\xi^2|} \psi^c(\lambda+\xi^2)  |\widehat F(\xi,\lambda)| d\xi d\lambda \les \int_{\R^2} \frac{   \chi_{[-1,1]}(\xi)}{\la\lambda+\xi^2\ra}    |\widehat F(\xi,\lambda)| d\xi d\lambda.
$$
For $b<\frac12$, this is bounded by $\|F\|_{X^{0,-b}}$ by Cauchy-Schwarz inequality in $\xi$ and $\lambda$ integrals.

This finishes the proof for $0\leq s \leq \frac12$.

For $s=\frac52$, $\frac{2s+1}{4}=\frac32$, we use the inequality
$$
\|f\|_{H^\frac32}\les \|f\|_{L^2} + \|f^\prime\|_{\dot H^{\frac12}}.
$$
The required bound for the $L^2$ norm follows from the $H^\frac12$ bound above.

Note that
\begin{multline*}
\frac{d}{dt}\Big[\eta(t) D_0\Big(\int_0^tW_\R(t- t^\prime) F  dt^\prime \Big) \Big]\\ =\eta^\prime(t) D_0\Big(\int_0^tW_\R(t- t^\prime) F  dt^\prime \Big)+i \eta(t)\int_{\R^2} \frac{\lambda e^{it \lambda }+\xi^2 e^{-it\xi^2}}{  \lambda+\xi^2 } \widehat F(\xi,\lambda) d\xi d\lambda\\=
\eta^\prime(t) D_0\Big(\int_0^tW_\R(t- t^\prime) F  dt^\prime \Big) \\ +i \eta(t)\int_{\R^2} \frac{  e^{it \lambda }-  e^{-it\xi^2}}{  \lambda+\xi^2 } (-\xi^2) \widehat F(\xi,\lambda) d\xi d\lambda+i \eta(t)\int_{\R^2} \frac{ e^{it \lambda } }{  \la\lambda+\xi^2\ra } \la \lambda+\xi^2\ra \widehat F(\xi,\lambda) d\xi d\lambda.
\end{multline*}
We bound the first integral in the last line using the case $s=\frac12$ we obtained above for  $\widehat G_1(\xi,\lambda)=\xi^2 \widehat F(\xi,\lambda)$, and the second integral using the proof of the case II for $\widehat G_2(\xi,\lambda)=\la \lambda+\xi^2\ra \widehat F(\xi,\lambda)$. Thus, we obtain 
\begin{multline*}
\Big\|\frac{d}{dt}\Big[\eta(t) D_0\Big(\int_0^tW_\R(t- t^\prime) F  dt^\prime \Big) \Big]\Big\|_{H^{\frac12}} \\
\les \|F\|_{X^{\frac12,-b}}+\|G_1\|_{X^{\frac12,-b}}+\|G_2\|_{X^{\frac12,-b}}\les \|F\|_{X^{\frac12,1-b}}+\|F\|_{X^{\frac52,-b}},
\end{multline*}
for all $b<\frac12$. 

Therefore, we have
\begin{align*}
\big\|\eta D_0\Big(\int_0^tW_\R(t- t^\prime) F  dt^\prime \Big)\big\|_{H^{\frac{2s+1}{4}}(\R)}\les \left\{ \begin{array}{ll} \|F\|_{X^{s,-b}}& \text{ for } 0\leq s \leq \frac12,  b<\frac12, \\
\|F\|_{X^{\frac12,1-b}}+\|F\|_{X^{\frac52,-b}}& \text{ for }   s = \frac52, b<\frac12. \end{array}
\right.
\end{align*}
We obtain the statement for $\frac12<s<\frac52$ by interpolation.

\end{proof}

\begin{proposition}\label{prop:smooth} For fixed $s>0$ and $a<\min(2s,\frac{1}{2})$, there exists $\epsilon>0$ such that for $\frac12-\epsilon<b <\frac12$, we have
$$\big\||u|^2u\big\|_{X^{s+a,-b }}\les \|u\|_{X^{s,b }}^3.$$ 
\end{proposition}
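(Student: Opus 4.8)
The plan is to prove this by duality, turning the trilinear $X^{s,b}$ bound into a weighted $L^2$ convolution estimate, and then reading off the smoothing gain $\la\xi\ra^a$ from the dispersive resonance relation. Since the dual of $X^{s+a,-b}$ is $X^{-s-a,b}$, it suffices to bound $\big|\int \widehat{|u|^2u}(\xi,\tau)\,\overline{\widehat v(\xi,\tau)}\,d\xi\,d\tau\big|$ by $\|u\|_{X^{s,b}}^3\|v\|_{X^{-s-a,b}}$. Writing $|u|^2u=u\bar u u$ on the Fourier side produces an integral over the constraint set $\xi=\xi_1-\xi_2+\xi_3$, $\tau=\tau_1-\tau_2+\tau_3$. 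Setting $f_j(\xi_j,\tau_j)=\la\xi_j\ra^s\la\tau_j+\xi_j^2\ra^{b}|\widehat u(\xi_j,\tau_j)|$ and $g(\xi,\tau)=\la\xi\ra^{-s-a}\la\tau+\xi^2\ra^{b}|\widehat v(\xi,\tau)|$, so that $\|f_j\|_{L^2}=\|u\|_{X^{s,b}}$ and $\|g\|_{L^2}=\|v\|_{X^{-s-a,b}}$, the claim reduces to the multiplier estimate
$$
\int_{\substack{\xi=\xi_1-\xi_2+\xi_3\\ \tau=\tau_1-\tau_2+\tau_3}} K\, f_1 f_2 f_3\, g \les \|f_1\|_{L^2}\|f_2\|_{L^2}\|f_3\|_{L^2}\|g\|_{L^2},\qquad K=\frac{\la\xi\ra^{s+a}}{\la\xi_1\ra^s\la\xi_2\ra^s\la\xi_3\ra^s\,\la M\ra^{b}\la M_1\ra^{b}\la M_2\ra^{b}\la M_3\ra^{b}},
$$
where $M=\tau+\xi^2$ and $M_j=\tau_j+\xi_j^2$.

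The second step is the resonance identity. Using $\tau=\tau_1-\tau_2+\tau_3$ the time variables cancel, and a direct computation gives $M-M_1+M_2-M_3=\xi^2-\xi_1^2+\xi_2^2-\xi_3^2=2(\xi-\xi_1)(\xi-\xi_3)$. Since $\xi-\xi_1=\xi_3-\xi_2$ and $\xi-\xi_3=\xi_1-\xi_2$, the largest of the four modulations, $L:=\max(\la M\ra,\la M_1\ra,\la M_2\ra,\la M_3\ra)$, therefore satisfies $L\gtrsim |\xi_1-\xi_2|\,|\xi_2-\xi_3|$. This single inequality is the entire source of the smoothing: whenever the frequency differences are large, one modulation weight in $K$ is large and can be traded for positive powers of the frequencies.

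The third step is the case analysis, where by the symmetry $\xi_1\leftrightarrow\xi_3$ I assume $\la\xi_1\ra\ge\la\xi_3\ra$, so $\la\xi\ra\les\la\xi_1\ra$ and the task is to absorb $\la\xi_1\ra^a$. In the \emph{resonant} regime, where all frequencies are comparable, I simply bound $\la\xi\ra^{s+a}\les\la\xi_1\ra^s\la\xi_2\ra^s\la\xi_3\ra^s$; this uses $s+a\le 3s$ and is exactly where the hypothesis $a<2s$ enters, after which the remaining integral closes by Cauchy--Schwarz in the modulation variables (equivalently an $L^4$ Strichartz input) with no gain needed. In the \emph{non-resonant} regime, one of $|\xi_1-\xi_2|,|\xi_2-\xi_3|$ is comparable to the top frequency, so $L$ is large; I split $\la M'\ra^{-b}=\la M'\ra^{-(b-\theta)}\la M'\ra^{-\theta}$ for the dominant modulation $M'$ and use $\la M'\ra^{\theta}\gtrsim(|\xi_1-\xi_2|\,|\xi_2-\xi_3|)^{\theta}$ to cancel $\la\xi_1\ra^{a}$, retaining $\la M'\ra^{-(b-\theta)}$ with $\theta<b$ for the leftover $L^2$ integral, which I evaluate after the substitution $\rho=\xi^2$ (Jacobian $\rho^{-1/2}$) via Cauchy--Schwarz and the calculus Lemma~\ref{lem:sums}.

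The main obstacle is the non-resonant high--low interaction, say $|\xi_1|$ large with $\xi_2,\xi_3$ bounded. There the gain is one-sided: $|\xi-\xi_1|=|\xi_2-\xi_3|$ may be $O(1)$ while $|\xi-\xi_3|=|\xi_1-\xi_2|\sim|\xi_1|$, so only $L\gtrsim|\xi_1|$ is available. Converting this into the full derivative gain $\la\xi_1\ra^{a}$ while keeping enough of $\la M'\ra^{-(b-\theta)}$ for the remaining $\rho=\xi^2$ integral to converge is precisely the balance that forces $a<\tfrac12$ and requires $b$ sufficiently close to $\tfrac12$, i.e.\ the existence of the $\epsilon$ in the statement. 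Organizing these sub-cases by which modulation realizes $L$ and checking the surviving integrals against Lemma~\ref{lem:sums} is the technical heart of the argument; the two thresholds $a<2s$ and $a<\tfrac12$ arise cleanly from the resonant and non-resonant regimes respectively.
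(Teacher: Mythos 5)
Your overall framework --- reduction to a weighted convolution estimate on the hyperplane $\xi=\xi_1-\xi_2+\xi_3$, $\tau=\tau_1-\tau_2+\tau_3$, plus the resonance identity $\xi^2-\xi_1^2+\xi_2^2-\xi_3^2=2(\xi-\xi_1)(\xi-\xi_3)$ --- is the same engine the paper runs on; your duality formulation versus the paper's $\sup_{\xi,\tau}\int M^2$ Cauchy--Schwarz reduction is essentially a cosmetic difference. The gap is in the case analysis. Your dichotomy is exhaustive at the level of frequencies, but the inference ``one of $|\xi_1-\xi_2|,|\xi_2-\xi_3|$ is comparable to the top frequency, so $L$ is large'' is false: the resonance relation only gives $L\gtrsim|\xi_1-\xi_2|\,|\xi_2-\xi_3|$, and in the high--low interaction (say $\xi\sim\xi_1\sim N$ and $\xi_2,\xi_3=O(1)$) the second factor can be arbitrarily small. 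On the sub-region $|\xi_2-\xi_3|\ll 1$ all four modulations can simultaneously be $O(1)$ (take $\tau_j=-\xi_j^2$), the multiplier still carries the unbounded factor $\la\xi\ra^{a}\sim N^{a}$, and neither of your two mechanisms (the pointwise bound using $s+a\le 3s$, or trading a large modulation for frequency) applies. Even your fallback claim that ``only $L\gtrsim|\xi_1|$ is available'' there is an overstatement: on that set nothing is available from the modulations at all.

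What closes this case in the paper is a measure/transversality input that your sketch does not contain. After reducing to $\int \la\xi\ra^{2a}\la\xi_2\ra^{-4s}\la(\xi_1-\xi)(\xi_1-\xi_2)\ra^{-1+}\,d\xi_1\,d\xi_2$ on the near-resonant set (using the relation $\la\xi_1\ra\la\xi-\xi_1+\xi_2\ra\sim\la\xi_2\ra\la\xi\ra$ there), one substitutes $x=(\xi_1-\xi)(\xi_1-\xi_2)$ in the $\xi_1$ integral; the Jacobian $dx=\pm\sqrt{4x+(\xi-\xi_2)^2}\,d\xi_1$ records that the level sets $\{|(\xi_1-\xi)(\xi_1-\xi_2)|\le\lambda\}$ have measure $\les \lambda/|\xi-\xi_2|+\sqrt{\lambda}$, and Lemma~\ref{lem:sqrt} converts this into the decisive factor $\la\xi-\xi_2\ra^{-1+}$; the final $\xi_2$ integral then yields $\la\xi\ra^{2a-1+}$, which is exactly where the threshold $a<\frac12$ comes from. (Equivalently one could inject a bilinear Strichartz estimate with gain $N^{-1/2}$.) Your substitution $\rho=\xi^2$ with Jacobian $\rho^{-1/2}$ is a one-variable linearization of the dispersion relation --- useful in the Kato-smoothing estimates elsewhere in the paper --- and does not see the two-variable resonance surface, so it cannot substitute for this step. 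Without the level-set/bilinear input, the resonant high--low case does not close and the claimed threshold $a<\frac12$ is not reachable from your argument.
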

 \begin{proof}
 By writing the Fourier transform of $|u|^2u =u\bar{u}u$ as a convolution, we obtain
\[ \widehat{|u|^2u}(\xi,\tau) = \int_{\xi_1,\xi_2}\int_{\tau_1,\tau_2} \widehat{u}(\xi_1,\tau_1)\overline{\widehat{u}(\xi_2,\tau_2)} \widehat{u}(\xi - \xi_1 + \xi_2, \tau - \tau_1+\tau_2). \]
Hence
\[
\| |u|^2u\|_{X^{s+a,-b }}^2 = \left\| \int_{\xi_1,\xi_2}\int_{\tau_1,\tau_2} \frac{\langle\xi\rangle^{s+a} \widehat{u}(\xi_1,\tau_1)\overline{\widehat{u}(\xi_2,\tau_2)} \widehat{u}(\xi - \xi_1 + \xi_2, \tau - \tau_1+\tau_2)}{\langle \tau  + \xi^2 \rangle^{b }} \right\|_{L^2_\xi L^2_\tau}^2. \]
We define
\[ f(\xi,\tau) = |\widehat{u}(\xi,\tau)|\langle \xi \rangle ^s \langle \tau+ \xi^2 \rangle^{b } \]
and
\[ M(\xi_1,\xi_2,\xi,\tau_1,\tau_2,\tau) = \frac{ \langle\xi\rangle^{s+a} \langle \xi_1 \rangle^{-s} \langle \xi_2 \rangle ^{-s} \langle \xi -\xi_1 + \xi _2 \rangle ^{-s}}{\langle \tau +\xi ^2 \rangle ^{b } \langle \tau_1 + \xi_1 ^2 \rangle ^{b } \langle \tau_2 + \xi_2 ^2 \rangle ^{b } \langle \tau - \tau_1 + \tau_2 + (\xi - \xi_1 + \xi_2 )^2 \rangle ^{b } }. \]
It is then sufficient to show that
\begin{align*} \left\| \int_{\xi_1,\xi_2}\int_{\tau_1,\tau_2} M(\xi_1,\xi_2,\xi,\tau_1,\tau_2,\tau) f(\xi_1,\tau_1)f(\xi_2,\tau_2)f(\xi-\xi_1+\xi_2,\tau-\tau_1+\tau_2) \right\|_{L^2_\xi L^2_\tau}^2 \\
\lesssim \|f\|_{L^2}^6 = \|u\|_{X^{s,b }}^6.
\end{align*}
By applying Cauchy-Schwarz in the $\xi_1,\xi_2,\tau_1,\tau_2$ integral and then using H\"{o}lder's inequality, we bound the norm above by
\begin{align*}
&\left\| \left( \int_{\xi_1,\xi_2}\int_{\tau_1,\tau_2} M^2\right)^{1/2}  \left( \int_{\xi_1,\xi_2}\int_{\tau_1,\tau_2} f^2(\xi_1,\tau_1)f^2(\xi_2,\tau_2)f^2(\xi-\xi_1+\xi_2,\tau-\tau_1+\tau_2)  \right)^{1/2} \right\|_{L^2_\xi L^2_\tau}^2 \\
&= \left\| \left( \int_{\xi_1,\xi_2}\int_{\tau_1,\tau_2} M^2\right)  \left( \int_{\xi_1,\xi_2}\int_{\tau_1,\tau_2} f^2(\xi_1,\tau_1)f^2(\xi_2,\tau_2)f^2(\xi-\xi_1+\xi_2,\tau-\tau_1+\tau_2) \right)  \right\|_{L^1_\xi L^1_\tau} \\
&\leq \sup_{\xi ,\tau} \left( \int_{\xi_1,\xi_2}\int_{\tau_1,\tau_2} M^2\right) \cdot \left\| \int_{\xi_1,\xi_2}\int_{\tau_1,\tau_2} f^2(\xi_1,\tau_1)f^2(\xi_2,\tau_2)f^2(\xi-\xi_1+\xi_2,\tau-\tau_1+\tau_2)  \right\|_{L^1_\xi L^1_\tau}\\
&= \sup_{\xi ,\tau} \left( \int_{\xi_1,\xi_2}\int_{\tau_1,\tau_2} M^2\right) \cdot \left\| f^2 \ast f^2 \ast f^2 \right\|_{L^1_\xi L^1_\tau}.
\end{align*}
Using Young's inequality, the norm $\left\| f^2 \ast f^2 \ast f^2 \right\|_{L^1_\xi L^1_\tau}$ can be estimated by $\|f\|_{L^2_\xi L^2_\tau}^6$. Thus it is sufficient to show that the supremum above is finite.
Using Lemma~\ref{lem:sums} in the $\tau_1, \tau_2$ integrals, the supremum is bounded by
\[ \sup_{\xi, \tau} \int \frac{ \langle\xi\rangle^{2s+2a} \langle \xi_1 \rangle^{-2s} \langle \xi_2 \rangle ^{-2s} \langle \xi -\xi_1 + \xi _2 \rangle ^{-2s}}{\langle \tau + \xi ^2 \rangle ^{2 b } \langle \tau+ \xi_1^2 - \xi_2^2 + (\xi - \xi_1 + \xi_2 )^2 \rangle ^{6b -2} } d\xi_1d\xi_2. \]

Using the relation $\langle \tau - a \rangle \langle \tau - b \rangle \gtrsim \langle a-b \rangle$, the above reduces to
\begin{align*}& \quad \sup_{\xi} \int \frac{ \langle\xi\rangle^{2s+2a} \langle \xi_1 \rangle^{-2s} \langle \xi_2 \rangle ^{-2s} \langle \xi -\xi_1 + \xi _2 \rangle ^{-2s}}{ \langle \xi^2 - \xi_1^2 + \xi_2^2 - (\xi - \xi_1 + \xi_2 )^2 \rangle ^{1-} } d\xi_1d\xi_2 \\
&= \sup_{\xi} \int \frac{ \langle\xi\rangle^{2s+2a} \langle \xi_1 \rangle^{-2s} \langle \xi_2 \rangle ^{-2s} \langle \xi -\xi_1 + \xi _2 \rangle ^{-2s}}{ \langle 2(\xi_1 - \xi)(\xi_1-\xi_2) \rangle ^{1-} } d\xi_1d\xi_2.
\end{align*}

We break the integral  into two pieces. The argument given in \cite{et2} shows that
$$ \sup_{\xi} \int_{\substack{ |\xi_1-\xi| \geq 1 \\ |\xi_1-\xi_2| \geq 1}} \frac{ \langle\xi\rangle^{2s+2a} \langle \xi_1 \rangle^{-2s} \langle \xi_2 \rangle ^{-2s} \langle \xi -\xi_1 + \xi _2 \rangle ^{-2s}}{ \langle (\xi_1-\xi)(\xi_1-\xi_2) \rangle ^{1-} } d\xi_1 d\xi_2 < \infty.
$$
To estimate the integral on the remaining set, $\{|\xi_1-\xi| \leq 1 \text{ or } |\xi_1-\xi_2| \leq 1\}$, note that
\be\label{eq:A}
\la \xi_1\ra \la\xi-\xi_1+\xi_2\ra \sim \la \xi_2\ra \la\xi \ra.
\ee
Therefore, we have
$$
\int_{\substack{ |\xi_1-\xi| \leq 1 \text{ or }\\ |\xi_1-\xi_2| \leq 1}}  \frac{ \langle\xi\rangle^{2s+2a} \langle \xi_1 \rangle^{-2s} \langle \xi_2 \rangle ^{-2s} \langle \xi -\xi_1 + \xi _2 \rangle ^{-2s}}{ \langle (\xi_1-\xi)(\xi_1-\xi_2) \rangle ^{1-} }  d\xi_1 d\xi_2 \les \int \frac{ \langle\xi\rangle^{ 2a}
  \langle \xi_2 \rangle ^{-4s} }{ \langle (\xi_1-\xi)(\xi_1-\xi_2) \rangle ^{1-} }  d\xi_1 d\xi_2
$$
we   use the substitution $x = (\xi_1 - \xi)(\xi_1-\xi_2)$ in the $\xi_1$ integral. This yields
$$
 2\xi_1 = \xi + \xi_2 \pm \sqrt{(\xi+\xi_2)^2 - 4(\xi\xi_2 - x)} = \xi + \xi_2 \pm \sqrt{4x + (\xi-\xi_2)^2}
 $$
 and
$$
dx = (2\xi_1-\xi-\xi_2) \; d\xi_1 = \pm \sqrt{4x + (\xi-\xi_2)^2} \; d\xi_1.
$$
Therefore, the integral above is bounded by
\[  \int \frac{ \langle\xi\rangle^{2a} \langle \xi_2 \rangle ^{-4s} }{ \langle x \rangle ^{1-} \sqrt{|4x + (\xi-\xi_2)^2|} } dx \; d\xi_2.\]

Using Lemma~\ref{lem:sqrt} and then Lemma~\ref{lem:sums} again, we bound the supremum of the integral above by

\begin{align*}
\sup_{\xi} \int \frac{\langle\xi\rangle^{2a} \langle \xi_2 \rangle ^{-4s} }{\langle (\xi - \xi_2)^2 \rangle ^{ \frac12- }} d\xi_2
&\les\sup_{\xi} \int \frac{\langle\xi\rangle^{2a} \langle \xi_2 \rangle ^{-4s} }{\langle \xi - \xi_2 \rangle ^{1-}} d\xi_2\\
&\les \sup_{\xi} \begin{cases}  \langle \xi \rangle ^{2a-1+}  &\text{for } s \geq \frac14 \\
 \langle \xi \rangle ^{2a-4s+}  &\text{for } s < \frac14 \end{cases}.
\end{align*}
For $a < \min(\frac12, 2s)$, this is finite.
 \end{proof}

 \begin{proposition}\label{prop:smooth2} For fixed $0< s <\frac52$, and  $0\leq a<\min(2s,\frac{1}{2}, \frac52-s)$,   there exists $\epsilon>0$ such that for $\frac12-\epsilon<b <\frac12$, we have
\begin{align*}
  & \text{ for } 0< s + a\leq \frac12,  \,\,\,\,\,\,\,\, \big\||u|^2u\big\|_{X^{s+a,-b}} \les \|u\|_{X^{s,b}}^3,  \\
&  \text{ for }  \frac12 < s +a < \frac52,  \,\,\,\,\,\,\,\,  \big\||u|^2u\big\|_{X^{\frac12,\frac{2s+2a-1-4b}{4}}} \les \|u\|_{X^{s,b}}^3.
\end{align*} 
\end{proposition}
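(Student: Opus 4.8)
The first estimate requires no new work. Since the hypotheses $0<s$ and $a<\min(2s,\frac12,\frac52-s)$ force in particular $a<\min(2s,\frac12)$, for the range $0<s+a\le\frac12$ the bound $\||u|^2u\|_{X^{s+a,-b}}\les\|u\|_{X^{s,b}}^3$ is exactly the content of Proposition~\ref{prop:smooth} applied at regularity level $s+a$. Thus the whole substance lies in the second estimate, which I would establish by imitating the multiplier analysis of Proposition~\ref{prop:smooth}; the one genuinely new feature is the \emph{sign} of the modulation exponent in the target norm.

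Writing $b'=\frac{2s+2a-1-4b}4$ and $f(\xi,\tau)=|\widehat u(\xi,\tau)|\la\xi\ra^s\la\tau+\xi^2\ra^b$, so that $\|f\|_{L^2}=\|u\|_{X^{s,b}}$, I would express $\la\xi\ra^{1/2}\la\tau+\xi^2\ra^{b'}\widehat{|u|^2u}$ as a trilinear form against $f(\xi_1,\tau_1)f(\xi_2,\tau_2)f(\xi_3,\tau_3)$ with $\xi_3=\xi-\xi_1+\xi_2$, $\tau_3=\tau-\tau_1+\tau_2$, the multiplier $M_2$ now carrying the numerator $\la\xi\ra^{1/2}\la\tau+\xi^2\ra^{b'}$ over the denominator $\la\xi_1\ra^s\la\xi_2\ra^s\la\xi_3\ra^s\prod_i\la\tau_i+\xi_i^2\ra^b$. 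Exactly as in Proposition~\ref{prop:smooth}, Cauchy--Schwarz in $(\xi_1,\xi_2,\tau_1,\tau_2)$ and Young's inequality reduce matters to $\sup_{\xi,\tau}\int M_2^2\,d\xi_1 d\xi_2 d\tau_1 d\tau_2<\infty$. Carrying out the $\tau_1,\tau_2$ integrations with Lemma~\ref{lem:sums} and writing $\mu=\tau+\xi^2$ and $R=\xi^2-\xi_1^2+\xi_2^2-\xi_3^2=2(\xi-\xi_1)(\xi_1-\xi_2)$ (the same resonance as in Proposition~\ref{prop:smooth}), this becomes
$$\sup_{\xi,\mu}\la\mu\ra^{2b'}\int\frac{\la\xi\ra\,\la\xi_1\ra^{-2s}\la\xi_2\ra^{-2s}\la\xi_3\ra^{-2s}}{\la\mu-R\ra^{6b-2}}\,d\xi_1 d\xi_2<\infty.$$

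The decisive difference from Proposition~\ref{prop:smooth} is that the target modulation weight now appears in the numerator as $\la\mu\ra^{2b'}$, instead of furnishing a second localizing factor $\la\tau+\xi^2\ra^{-2b}$; consequently the clean product bound $\la\tau+\xi^2\ra^{-2b}\la\mu-R\ra^{-(6b-2)}\les\la R\ra^{-(1-)}$ used there is not directly available. When $b'\le 0$ (that is, $s+a\le 2b+\frac12$) the factor $\la\mu\ra^{2b'}$ is harmless and the argument is verbatim that of Proposition~\ref{prop:smooth} with $\la\xi\ra^{2s+2a}$ replaced by $\la\xi\ra$. When $b'>0$ I would split according to whether $|R|\sim|\mu|$: on $\{|\mu-R|<|\mu|/2\}$ one has $\la\mu\ra^{2b'}\sim\la R\ra^{2b'}$, so the weight migrates into the spatial integral as $\la R\ra^{2b'}=\la 2(\xi-\xi_1)(\xi_1-\xi_2)\ra^{2b'}$ while $\la\mu-R\ra^{-(6b-2)}$ keeps its full localizing strength; on $\{|\mu-R|\ge|\mu|/2\}$ one bounds $\la\mu\ra^{2b'}\les\la\mu-R\ra^{2b'}$, lowering the localization exponent to $6b-2-2b'=8b-\tfrac32-(s+a)$. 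In either region I would then rerun the spatial estimate of Proposition~\ref{prop:smooth}: peel off the far set $\{|\xi_1-\xi|\ge1,\ |\xi_1-\xi_2|\ge1\}$ by the argument of \cite{et2}, on the near-diagonal set invoke \eqref{eq:A} to replace $\la\xi_1\ra^{-2s}\la\xi_3\ra^{-2s}$ by $\la\xi\ra^{-2s}\la\xi_2\ra^{-2s}$, substitute $x=(\xi_1-\xi)(\xi_1-\xi_2)$, and close with Lemma~\ref{lem:sqrt} followed by Lemma~\ref{lem:sums}.

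The main obstacle is precisely the positive weight $\la\mu\ra^{2b'}$: one must show that the localization $R\approx\mu$ enforced by $\la\mu-R\ra^{-(6b-2)}$ produces, through the frequency weights $\la\xi_i\ra^{-2s}$, enough decay in $\mu$ to absorb $\la\mu\ra^{2b'}$ — equivalently, that large $|R|$ forces a large frequency gap $|\xi-\xi_1|$ or $|\xi_1-\xi_2|$ and hence a large $\la\xi_i\ra$. The bookkeeping that survives from the spatial integral reproduces the two constraints $a<2s$ and $a<\frac12$ of Proposition~\ref{prop:smooth}, whereas the reduced modulation exponent $6b-2-2b'\to\frac52-(s+a)$ as $b\to\frac12^-$ must stay positive, which is exactly the new hypothesis $a<\frac52-s$ — explaining why it is needed here but not in Proposition~\ref{prop:smooth}. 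Once the case $b'>0$ is in hand the full range $\frac12<s+a<\frac52$ follows, and together with Proposition~\ref{prop:smooth} this furnishes both terms demanded by Proposition~\ref{prop:duhamelkato} at regularity level $s+a$.
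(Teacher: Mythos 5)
Your reduction to the finiteness of
$$\sup_{\xi,\tau}\ \la \mu\ra^{2b'}\int \frac{\la\xi\ra\,\la\xi_1\ra^{-2s}\la\xi_2\ra^{-2s}\la\xi-\xi_1+\xi_2\ra^{-2s}}{\la\mu-R\ra^{6b-2}}\,d\xi_1\,d\xi_2,\qquad \mu=\tau+\xi^2,\ R=2(\xi-\xi_1)(\xi_1-\xi_2),$$
is exactly the paper's starting point, and you correctly locate both the new difficulty (the modulation weight of the target norm now sits in the numerator) and the origin of the hypothesis $a<\tfrac52-s$ (the surviving localization exponent $6b-2-2b'\to\tfrac52-(s+a)$ must stay positive). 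Your splitting for $b'>0$ according to $|\mu-R|\lessgtr|\mu|/2$ is essentially the paper's inequality $\la\mu\ra\les\la\mu-R\ra+\la\xi-\xi_1\ra\la\xi_1-\xi_2\ra$ raised to the power $2b'$. But two of your reductions to ``rerun Proposition~\ref{prop:smooth}'' conceal the actual work and, as stated, would fail. First, for $b'\le 0$ the argument is \emph{not} verbatim: using $\la\mu\ra^{2b'}$ together with $\la\mu-R\ra^{-(6b-2)}$ via $\la\tau-a\ra\la\tau-b\ra\gtrsim\la a-b\ra$ yields only $\la R\ra^{-(2b+\frac12-s-a)}$, and since $s+a>\tfrac12$ this exponent is strictly below the $1-$ for which the far-set estimate of \cite{et2} and the near-diagonal computation of Proposition~\ref{prop:smooth} are proved (it degenerates to $0+$ as $s+a\to\tfrac32$). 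Simply discarding $\la\mu\ra^{2b'}$ as ``harmless'' is worse still, since without any resonance factor the spatial integral diverges for $s\le\tfrac12$. The paper has to redo both regions: for $s>\tfrac12$ it drops the resonance factor and closes with Lemma~\ref{lem:sums} alone, while for $\tfrac16<s\le\tfrac12$ it reworks the near-diagonal set with \eqref{eq:A}, the substitution $x=(\xi_1-\xi)(\xi_1-\xi_2)$ and Lemma~\ref{lem:sqrt} at the weaker exponent $2(2b-s-a)$, and replaces the \cite{et2} far-set bound by a separate computation (change of variables $\xi_2\to\xi_1+\xi_2$, $\xi_1\to\xi+\xi_1$ and the dichotomy $|\xi+\xi_1+\xi_2|\gtrsim|\xi|$ or $|\xi+\xi_1|\gtrsim|\xi|$).

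Second, in the regime $\tfrac32\le s+a<\tfrac52$ (your $b'>0$ case) the integrand that survives after absorbing $\la\mu\ra^{2b'}$ carries the \emph{positive} powers $\la\xi-\xi_1\ra^{2b'}\la\xi_1-\xi_2\ra^{2b'}$ in the numerator and no resonance factor in the denominator, so the substitution $x=(\xi_1-\xi)(\xi_1-\xi_2)$ and Lemma~\ref{lem:sqrt} have nothing to act on; ``rerunning the spatial estimate of Proposition~\ref{prop:smooth}'' is not the right move here. What saves the day is that $s+a\ge\tfrac32$ and $a<\tfrac12$ force $s>1$, so the weights $\la\xi_i\ra^{-2s}$ are strong enough to absorb these growing factors; the paper exploits this through the same change of variables and subcase analysis as above, together with elementary bounds such as $\la\xi_1\ra\les\la\xi+\xi_1\ra\la\xi\ra$ and repeated use of Lemma~\ref{lem:sums}. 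Your final paragraph names the right obstacle but does not carry out any of these integrals, which is where the proposition is actually proved; as written the proposal is a correct plan for $s+a\le\tfrac12$ and an accurate diagnosis, but not a proof, for $s+a>\tfrac12$.
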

\begin{proof} For $s+a\leq \frac12$, the statement follows from Proposition~\ref{prop:smooth}.

We now consider the case $\frac12 < s +a < \frac52$. Since $a<2s$,   we always have $s>\frac16$.   Let \[ S:=   \int \frac{ \langle \tau + \xi ^2 \rangle^{s+a-2b-\frac12} \langle\xi\rangle  \langle \xi_1 \rangle^{-2s} \langle \xi_2 \rangle ^{-2s} \langle \xi -\xi_1 + \xi _2 \rangle ^{-2s}}{ \langle \tau+ \xi_1^2 - \xi_2^2 + (\xi - \xi_1 + \xi_2 )^2 \rangle ^{6b -2} } d\xi_1d\xi_2. \]Following the proof of Proposition~\ref{prop:smooth}, it suffices to prove that
\[ \sup_{\xi, \tau}  S <\infty. \]
We consider the cases $\frac12<s+a<\frac32$ and $\frac32\leq s+a<\frac52$ separately.

{\bf Case 1) } $\frac12< s+a<\frac32$. Taking $\epsilon$ sufficiently small, we have $s+a-2b-\frac12<0$. Using the identity $\langle \tau - a \rangle \langle \tau - b \rangle \gtrsim \langle a-b \rangle$, and noting that $2b+\frac12-s-a<6b-2$ (for $\epsilon$ sufficiently small), we obtain
\begin{multline*}
S\les    \int \frac{  \langle\xi\rangle  \langle \xi_1 \rangle^{-2s} \langle \xi_2 \rangle ^{-2s} \langle \xi -\xi_1 + \xi _2 \rangle ^{-2s}}{ \langle \xi ^2 -\xi_1^2 + \xi_2^2 - (\xi - \xi_1 + \xi_2 )^2 \rangle ^{2b+\frac12-s-a} } d\xi_1d\xi_2 \\
 \les  \int \frac{  \langle\xi\rangle  \langle \xi_1 \rangle^{-2s} \langle \xi_2 \rangle ^{-2s} \langle \xi -\xi_1 + \xi _2 \rangle ^{-2s}}{ \langle (\xi_1-\xi) (\xi_1-\xi_2) \rangle ^{2b+\frac12-s-a} } d\xi_1d\xi_2.
\end{multline*}
We can estimate this for $s>\frac12$ by
$$
 \int \langle\xi\rangle  \langle \xi_1 \rangle^{-2s} \langle \xi_2 \rangle ^{-2s} \langle \xi -\xi_1 + \xi _2 \rangle ^{-2s}  d\xi_1d\xi_2 \les 1
$$
by using Lemma~\ref{lem:sums} twice.

It remains to consider the case $\frac16<s\leq\frac12$. Since $a<\min(2s,\frac12), $ we have $\frac12<s+a<\min(3s,s+\frac12) $.

Consider the sets $A= \{|x_1-\xi|< 1 \text{ or }  |x_1-\xi_2| < 1\}$ and $B= \{|x_1-\xi|\geq 1 \text{ and }  |x_1-\xi_2| \geq 1\}$.
Since on $A$ we have \eqref{eq:A}, we obtain
$$
\int_A \frac{  \langle\xi\rangle  \langle \xi_1 \rangle^{-2s} \langle \xi_2 \rangle ^{-2s} \langle \xi -\xi_1 + \xi _2 \rangle ^{-2s}}{ \langle (\xi_1-\xi) (\xi_1-\xi_2) \rangle ^{2b+\frac12-s-a} } d\xi_1d\xi_2 \les \int_A \frac{  \langle\xi\rangle^{1-2s}   \langle \xi_2 \rangle ^{-4s} }{ \langle (\xi_1-\xi) (\xi_1-\xi_2) \rangle ^{2b+\frac12-s-a} } d\xi_1d\xi_2.
$$
Proceeding as in Proposition~\ref{prop:smooth} by substituting $x=(\xi_1-\xi) (\xi_1-\xi_2)$ in the $\xi_1$ integral, we bound this by
$$
\int  \frac{  \langle\xi\rangle^{1-2s}   \langle \xi_2 \rangle ^{-4s} }{ \la  x \ra^{2b+\frac12-s-a} \sqrt{|4x+(\xi-\xi_2)^2|}} dx d\xi_2\les \int  \frac{  \langle\xi\rangle^{1-2s}   \langle \xi_2 \rangle ^{-4s} }{   \la\xi-\xi_2\ra^{2 (2b-s-a)}} d\xi_2,
$$
where we used Lemma~\ref{lem:sqrt} (taking $\epsilon$ sufficiently small). Using Lemma~\ref{lem:sums} (noting that $2 (2b-s-a)<1$), we bound this by
\begin{align*}
 \begin{cases}  \langle \xi \rangle ^{2-4b+2a-4s+ }  &\text{for }   s\leq \frac14 \\
  \langle \xi \rangle ^{1-4b+2a   }  &\text{for }   s>\frac14    \end{cases}
\end{align*}
which is bounded for $a<\min(2s,\frac12)$, provided that $\epsilon$ is sufficiently small.

We bound the integral on the set $B$ by (after the change of variable $\xi_2 \to \xi_1+\xi_2$, $\xi_1\to \xi+\xi_1$)
\begin{multline*}
\int \frac{  \langle\xi\rangle  \langle \xi_1 \rangle^{-2s} \langle \xi_2 \rangle ^{-2s} \langle \xi -\xi_1 + \xi _2 \rangle ^{-2s}}{ \la \xi_1-\xi\ra^{2b+\frac12-s-a}  \la \xi_1-\xi_2 \ra^{2b+\frac12-s-a} } d\xi_1d\xi_2= \\ \int \frac{  \langle\xi\rangle  \langle \xi+\xi_1 \rangle^{-2s} \langle \xi+\xi_2\rangle ^{-2s} \langle \xi +\xi_1 + \xi _2 \rangle ^{-2s}}{ \la \xi_1 \ra^{2b+\frac12-s-a}  \la  \xi_2 \ra^{2b+\frac12-s-a} } d\xi_1d\xi_2.
\end{multline*}
By symmetry, we have the following subcases $|\xi+\xi_1+\xi_2|\gtrsim |\xi|$ and  $|\xi+ \xi_1 |\gtrsim |\xi|$, which leads to the bound (using Lemma~\ref{lem:sums} repeatedly)
\begin{multline*}
\langle\xi\rangle^{1-2s}\Big( \int \frac{   \langle \xi+\xi_1 \rangle^{-2s} }{ \la \xi_1 \ra^{2b+\frac12-s-a}    } d\xi_1\Big)^2 + \langle\xi\rangle^{1-2s}
\int \frac{    \la \xi+\xi_2\ra^{-2s} \langle \xi +\xi_1 + \xi _2 \rangle ^{-2s}}{ \la \xi_1 \ra^{2b+\frac12-s-a}  \la  \xi_2 \ra^{2b+\frac12-s-a} } d\xi_1d\xi_2
\\
\les \la\xi\ra^{1-2s} \big(\la\xi\ra^{-(s+2b-\frac12-a)+}\big)^2 + \la\xi\ra^{1-2s} \int \frac{   1}{ \la \xi+\xi_2 \ra^{3s+2b-\frac12- a-}  \la  \xi_2 \ra^{2b+\frac12-s-a} }  d\xi_2 \\
\les \langle \xi \rangle ^{2-4b+2a-4s+ } + \begin{cases}  \langle \xi \rangle ^{2-4b+2a-4s+ }  &\text{for }   3s+2b-\frac12- a \leq 1 \\
  \langle \xi \rangle ^{\frac12-2b-s+a}   &\text{for }   3s+2b-\frac12- a >1    \end{cases}
\end{multline*}
This is bounded for  $a<\min(2s,\frac12)$, provided that $\epsilon$ is sufficiently small.

{\bf Case 2) } $\frac32\leq  s+a<\frac52$. In this case  $s+a-2b-\frac12\geq 0$. Using
\begin{multline*}
\la\tau+\xi^2\ra=  \la \tau+ \xi_1^2 - \xi_2^2 + (\xi - \xi_1 + \xi_2 )^2 + 2(\xi-\xi_1)(\xi_1-\xi_2)\ra\\
\les  \la \tau+ \xi_1^2 - \xi_2^2 + (\xi - \xi_1 + \xi_2 )^2 \ra + \la \xi-\xi_1\ra \la \xi_1-\xi_2\ra.
\end{multline*}
Also noting that in this case $s+a-2b-\frac12<6b-2$ for ($\epsilon$ sufficiently small), we have
\begin{multline*}
S\les \int   \la \xi-\xi_1\ra^{s+a-2b-\frac12} \la \xi_1-\xi_2\ra^{s+a-2b-\frac12} \langle\xi\rangle  \langle \xi_1 \rangle^{-2s} \langle \xi_2 \rangle ^{-2s} \langle \xi -\xi_1 + \xi _2 \rangle ^{-2s}  d\xi_1d\xi_2 \\
 =
\int   \la  \xi_1\ra^{s+a-2b-\frac12} \la  \xi_2\ra^{s+a-2b-\frac12} \langle\xi\rangle  \langle \xi+\xi_1 \rangle^{-2s} \langle \xi+\xi_2 \rangle ^{-2s} \langle \xi +\xi_1 + \xi _2 \rangle ^{-2s}  d\xi_1d\xi_2.
\end{multline*}
Here we applied the change of variable $ \xi_2\to \xi_1+\xi_2$, $  \xi_1\to \xi+\xi_1 $. Considering the subcases $|\xi+\xi_1+\xi_2|\gtrsim |\xi|$ and  $|\xi+ \xi_1 |\gtrsim |\xi|$ we have the bound
\begin{multline*}
S\les \la \xi\ra^{1-2s} \Big( \int   \la  \xi_1\ra^{s+a-2b-\frac12}    \langle \xi+\xi_1 \rangle^{-2s}    d\xi_1\Big)^2 \\
+ \la \xi\ra^{1-2s} \int   \la  \xi_1\ra^{s+a-2b-\frac12} \la  \xi_2\ra^{s+a-2b-\frac12}  \langle \xi+\xi_2 \rangle ^{-2s} \langle \xi +\xi_1 + \xi _2 \rangle ^{-2s}  d\xi_1d\xi_2\\
=:S_1+S_2.
\end{multline*}
Using $\la\xi_1\ra\les \la \xi+\xi_1\ra \la \xi\ra$, we have
$$
S_1\les  \la \xi\ra^{ 2  a-4b } \Big( \int     \langle \xi+\xi_1 \rangle^{-s+a-2b-\frac12}    d\xi_1\Big)^2 \les 1
$$
by the restrictions on $a,b,s$. Using $ \la  \xi_1\ra \les \la \xi+\xi_2\ra \la \xi+\xi_1+\xi_2\ra$ and $\la \xi_2\ra \les \la\xi\ra \la\xi+\xi_2\ra $ we have
$$
S_2\les  \la \xi\ra^{ \frac12-s+a-2b}   \int     \langle \xi+\xi_2 \rangle^{2a-4b-1}\la \xi+\xi_1+\xi_2\ra^{a-2b-\frac12-s}   d\xi_1d\xi_2 \les 1
$$
by the restrictions on $a,b,s$.
\end{proof}

\section{Local theory: The proof of  Theorem~\ref{thm:local}} \label{sec:prt3}

We first prove that
\begin{equation}\label{eq:gamma}
\Gamma u(t):=\eta(t)W_\R(t)g_e+ \eta(t) \int_0^tW_\R(t- t^\prime)  F(u) \,d t^\prime + \eta(t) W_0^t\big(0, h-p-q  \big)(t),
\end{equation}
has a fixed point in $X^{s,b}$. Here  $s\in(0,\frac52)$, $s\neq \frac12,\frac52$,    $b<\frac12$ is sufficiently close to $\frac12$, and
\begin{multline*}
F(u)=\eta(t/T) | u|^2  u,\,\,\,\,p(t)= \eta(t ) D_0 (W_\R g_e),\,\,\,\text{ and }\\  q(t)=\eta(t ) D_0\Big(\int_0^tW_\R(t- t^\prime)  F(u)\, d t^\prime \Big).
\end{multline*}

To see that $\Gamma$ is bounded in $X^{s,b}$ recall the following bounds:

By \eqref{eq:xs1}, we have
$$
\|\eta W_\R (t) g_e\|_{X^{s,b}} \les \|g_e\|_{H^s}\les \|g\|_{H^s(\R^+)}.
$$
Combining \eqref{eq:xs2}, \eqref{eq:xs3}, and Proposition~\ref{prop:smooth}, we obtain
\begin{multline*}
  \| \eta(t) \int_0^tW_\R(t- t^\prime)  F(u) \,d t^\prime \|_{X^{s,b}} \les \|F(u)\|_{X^{s,-\frac12+}} \les T^{\frac12-b-} \||u|^2u\|_{X^{s,-b}}\les T^{\frac12-b-} \|u\|_{X^{s,b}}^3.
\end{multline*}
Using Proposition~\ref{prop:wbh} and Lemma~\ref{lem:Hs0} (noting that the compatibility condition holds) we have
\begin{multline} \label{eq:temp1}
 \| \eta(t) W_0^t\big( 0, h-p-q \big)(t)\|_{X^{s,b}}
\les \|(h-p-q)\chi_{(0,\infty)} \|_{H^{\frac{2s+1}{4}}_t(\R)} \\ \les \|h-p\|_{H^{\frac{2s+1}{4}}_t(\R^+)} + \|q\|_{H^{\frac{2s+1}{4}}_t(\R^+)}
\les \|h \|_{H^{\frac{2s+1}{4}}_t(\R^+)} +\|p\|_{H^{\frac{2s+1}{4}}_t(\R )}+\|q\|_{H^{\frac{2s+1}{4}}_t(\R )}.
\end{multline}
By Kato smoothing Lemma~\ref{lem:kato}, we have
$$
\|p\|_{H^{\frac{2s+1}{4}}_t(\R )} \les \|g\|_{H^s(\R^+)}.
$$
Finally, by  Propostion~\ref{prop:duhamelkato}, \eqref{eq:xs3}, and  Proposition~\ref{prop:smooth2} we have
\begin{multline*}
\|q\|_{H^{\frac{2s+1}{4}}_t(\R )} \les \left\{ \begin{array}{ll} \|F\|_{X^{s,-\frac12+}}& \text{ for } 0\leq s \leq \frac12   \\
\|F\|_{X^{\frac12,\frac{2s-3+}{4}}}+\|F\|_{X^{s,-\frac12+}}& \text{ for }   \frac12<  s < \frac52 \end{array}
\right. \\ \les T^{ \frac12-b-}\left\{ \begin{array}{ll} \||u|^2u\|_{X^{s,-b}}& \text{ for } 0\leq s \leq \frac12  \\
\||u|^2u\|_{X^{\frac12,\frac{2s-1-4b}{4}}}+\||u|^2u\|_{X^{s,-b}}& \text{ for }  \frac12<  s < \frac52 \end{array}\right.
\les T^{ \frac12-b-} \| u\|_{X^{s,b}}^3.
\end{multline*}
Combining these estimates, we obtain
$$
\|\Gamma u\|_{X^{s,b}}\les \|g\|_{H^s(\R^+)}+ \|h \|_{H^{\frac{2s+1}{4}}_t(\R^+)} + T^{ \frac12-b-} \| u\|_{X^{s,b}}^3.
$$
This yields the existence of a fixed point $u$ in $X^{s,b}$.
Now we prove that $u\in C^0_t H^s_x([0,T)\times \R)$. Note that the first term in the definition \eqref{eq:gamma}  is continuous in $H^s$.  The continuity of the third term
follows from Lemma~\ref{lem:wbcont}  and \eqref{eq:temp1}. For the second term it follows from the embedding  $X^{s,\frac12+}\subset C^0_tH^s_x$
 and \eqref{eq:xs2} together with Proposition~\ref{prop:smooth}. The fact that  $u\in C^0_x H^{\frac{2s+1}{4}}_t(\R\times [0,T])$ follows similarly from Lemma~\ref{lem:kato}, Proposition~\ref{prop:duhamelkato}, and Lemma~\ref{lem:wbcont}.

The continuous dependence on the initial and boundary data follows from the fixed point argument and the a priori estimates as in the previous paragraph.
The uniqueness issue is discussed in Section~\ref{sec:unique} below. 

To finish the proof of Theorem~\ref{thm:local} we need to quantify the dependence of $T$ to initial and boundary data. By scaling, note that if $u$ solves the equation on $[0,\lambda^{-2}]$, then $u^\lambda(x,t)=\frac1\lambda u(\frac x\lambda,\frac t{\lambda^2})$ solves the equation with data $g^\lambda(x)=\frac1\lambda g(\frac x\lambda)$ and $h^\lambda(t)=\frac1\lambda h(\frac t{\lambda^2})$ on $[0,1]$. Note that for $\lambda>1$,
$$
\|h^\lambda\|_{H^{\frac{2s+1}{4}}(\R^+)}\les \|h\|_{H^{\frac{2s+1}{4}}(\R^+)},
$$
\begin{multline*} \|g^\lambda\|_{H^s(\R^+)}\leq \| g_\lambda\|_{L^2(\R^+)}+ \|g^\lambda\|_{\dot H^s(\R^+)}\\ \leq \lambda^{-1/2} \|g\|_{L^2(\R^+)} + \lambda^{-\frac12-s} \|g \|_{\dot H^s(\R^+)} \leq \|g\|_{L^2(\R^+)} + \lambda^{-\frac12-s} \|g \|_{H^s(\R^+)}
\end{multline*}
Therefore, for $ \lambda^{-\frac12-s} \|g \|_{H^s(\R^+)} \approx 1$, the solution in $[0,\lambda^{-2}] $ is defined up  to  the local existence time $T\approx [C+\|g\|_{H^s(\R^+)}]^{-\frac4{2s+1}}$. Here the constant $C$ depends on $\|g\|_{L^2}+\|h\|_{H^{\frac{2s+1}{4}}(\R^+)}$.
Alternatively, to obtain a local existence interval without implicit dependence on $\|g\|_{L^2}$  we can use
$$
 \|g^\lambda\|_{H^s}\leq \| g_\lambda\|_{L^2}+ \|g^\lambda\|_{\dot H^s}\leq \lambda^{-1/2} \|g\|_{L^2} + \lambda^{-\frac12-s} \|g \|_{\dot H^s}
 \leq  \lambda^{-\frac12 } \|g \|_{ H^s},
$$
which leads to $T\approx [C+\|g\|_{  H^s}]^{-4}$ with $C=C(\|h\|_{H^{\frac{2s+1}{4}}(\R^+)})$. This will be used in Section~\ref{sec:prt1t2} below.

\subsection{Uniqueness of mild solutions}\label{sec:unique} In this section we discuss the uniqueness of solutions of \eqref{nls}, also see \cite{holmer, bonaetal}. 
The solution we constructed above is the unique fixed point of \eqref{eq:gamma}. However, it is not a priori clear if different extensions of initial data produce  the same solution on $\R^+$.  It is also not clear if the solution we constructed is same as the solutions obtained in \cite{holmer, bonaetal}. 
To resolve this issue,  
first note that the restriction to $\R^+$ of the solution we constructed (the fixed point of \eqref{eq:gamma}) is a mild solution as defined in \cite[Definition 3]{holmer}. Therefore, the uniqueness part of the theorem for $s>\frac12$ follows from a simple argument based on energy estimates which implies the uniqueness of mild solutions, see  \cite[Proposition~1]{holmer}. In particular, the restriction of $u$ to $\R^+$ is independent of the extension $g_e$ of $g$ to $\R^+$.

We now prove that the restriction of $u$ to $\R^+$ is independent of the extension of $g$ also in the case $s\in(0,1/2)$. Let $g_1$, $g_2$ be two $H^s(\R)$ extensions of $g\in H^s(\R^+)$, $s\in(0,1/2)$. Take a sequence $f_n \in H^2(\R^+)$ converging to $g$ in $H^s(\R^+)$.
Let $f_n^1, f_n^2 \in H^2(\R)$ be extensions of $f_n$ converging to $g_1, g_2$ 
in $H^r(\R)$ for $r<s$, see Lemma~\ref{lem:ext} below. Also take a   sequence $h_n\in H^2(\R^+)$   converging to $h$ in $H^{\frac{2s+1}{4}}(\R^+)$. By the uniqueness of mild $H^2(\R^+)$ solutions the  restriction of the corresponding solutions $u_n^1$, $u_n^2$ to $\R^+$ are the same. Since, by the fixed point argument,  the solutions $u^1$,
$u^2$ are the limits of $u_n^1$, $u_n^2$, respectively, in $H^r(\R)$, their restriction to $\R^+$ are the same.

\begin{lemma}\label{lem:ext} Fix $0<s<\frac12$. 
Let $g\in H^s(\R^+)$, $f\in H^2(\R^+)$, and let $g_e$ be an $H^s$ extension of $g$ to $\R$. Then there is an $H^2$ extension $f_e$ of $f$ to $\R$ so that 
$$\|g_e-f_e\|_{H^r(\R)}\les \|g-f\|_{H^s(\R^+)},\,\,\text{ for } r<s.
$$ 
\end{lemma}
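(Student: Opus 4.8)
The plan is to build $f_e$ by using the given extension $g_e$ (suitably smoothed) on the left half line and $f$ itself on the right half line, gluing the two across the origin so that the result is genuinely $H^2$ while its $H^r$ distance from $g_e$ is controlled by the right half line discrepancy $g-f$. Two elementary facts make the hypothesis $0<r<s<\frac12$ work in our favour. First, since $|r|<\frac12$, multiplication by $\chi_{(0,\infty)}$ (equivalently by $\chi_{(-\infty,0)}$) is bounded on $H^r(\R)$, so I may localize the $H^r$ norm to the two half lines; this is the reflected form of part i) of Lemma~\ref{lem:Hs0}. Second, a bump of amplitude $A$ supported on an interval of length $\eta$ has $H^r$ norm $\les A\,\eta^{\frac12-r}$, which is small for small $\eta$ because $r<\frac12$; this will let me repair the $H^2$ matching at $x=0$ at negligible cost in $H^r$.

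Concretely, set $w:=g-f\in H^s(\R^+)$. On the right the zero extension obeys $\|\chi_{(0,\infty)}w\|_{H^r(\R)}\le\|\chi_{(0,\infty)}w\|_{H^s(\R)}\les\|w\|_{H^s(\R^+)}$ by part i) of Lemma~\ref{lem:Hs0}, so $g_e-\chi_{(0,\infty)}w$ is already an $H^s$ extension of $f$ lying within $\|w\|_{H^s(\R^+)}$ of $g_e$ in $H^r$; its only defect is that it coincides with the rough $g_e$ on $\R^-$ and hence is not $H^2$. I would therefore replace $g_e$ on $(-\infty,0)$ by a mollification $g_e*\phi_\e$, which is smooth, lies in every $H^N$, and satisfies the low regularity gain $\|g_e-g_e*\phi_\e\|_{H^r}\les \e^{\,s-r}\|g_e\|_{H^s}$. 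Because $f\in H^2(\R^+)\hookrightarrow C^1$, matching $g_e*\phi_\e$ to $f$ in the $H^2$ sense at the origin only requires prescribing the value and first derivative there, which I achieve by adding a correction $c$ supported on a short interval $(-\eta,0)$ with $c(0)=f(0)-(g_e*\phi_\e)(0)$ and $c'(0)=f'(0)-(g_e*\phi_\e)'(0)$. Defining $f_e:=f$ on $\R^+$ and $f_e:=g_e*\phi_\e+c$ on $\R^-$ then produces an $H^2(\R)$ function with $f_e|_{\R^+}=f$.

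For the estimate I would split $g_e-f_e$ by the two cutoffs: on $\R^+$ it equals $w$, contributing $\les\|w\|_{H^s(\R^+)}$, while on $\R^-$ it equals $(g_e-g_e*\phi_\e)-c$, whose $H^r$ norm is at most $\e^{\,s-r}\|g_e\|_{H^s}$ plus the correction term $\les(|c(0)|\,\eta^{\frac12-r}+|c'(0)|\,\eta^{\frac32-r})$. Choosing first $\e$ and then $\eta$ small forces both of these below $\|w\|_{H^s(\R^+)}=\|g-f\|_{H^s(\R^+)}$, and boundedness of $\chi_{(-\infty,0)}$ on $H^r$ keeps the two half line contributions from interacting. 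The main obstacle is exactly the tension between the two demands on $f_e$: it must be $H^2$, so its restriction to $\R^-$ cannot simply be the rough $g_e$, yet it must stay within $O(\|g-f\|_{H^s(\R^+)})$ of $g_e$ in $H^r$. The resolution rests squarely on $r<s<\frac12$: the gap $s-r>0$ gives the mollification enough room to smooth $g_e$ cheaply in $H^r$, and $r<\frac12$ makes the interface matching a harmless, arbitrarily $H^r$-small operation, so that no value of $g_e$ at the origin---which does not exist at this regularity---is ever needed.
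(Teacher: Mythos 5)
Your argument is correct, but it is organized differently from the paper's. The paper reduces the lemma to a one-sided approximation claim: any $\psi\in H^s(\R)$ supported in $(-\infty,0]$ can be approximated in $H^r(\R)$, $r<s$, by an $H^2$ function supported in $(-\infty,0)$; this is proved by truncating $\psi$ to $(-\infty,-\delta)$ --- convergence in $H^r$ follows by interpolating the $L^2$ convergence against the uniform $H^s$ bound from Lemma~\ref{lem:Hs0}~i) --- and then mollifying at a scale smaller than $\delta$, so the mollified function never reaches the origin. The lemma then follows by applying the claim to $\psi=g_e-\widetilde f-h$, where $\widetilde f$ is an $H^2(\R)$ extension of $f$ and $h$ is a controlled $H^s(\R)$ extension of $g-f$ (so that $\psi$ vanishes on $\R^+$), and setting $f_e=\widetilde f+\phi$. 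The essential difference is that the paper's truncation pushes the support strictly away from $x=0$ before mollifying, so no interface matching is ever required: $\widetilde f$ is already globally $H^2$ and the perturbation $\phi$ never touches $\R^+$. You instead mollify $g_e$ on the left half line and repair the $C^1$ mismatch at the origin with a short bump, which obliges you to invoke $H^2(\R^+)\hookrightarrow C^1$, the $H^2$ gluing criterion at the interface, and the scaling $\|c\|_{H^r}\les |c(0)|\,\eta^{\frac12-r}+|c'(0)|\,\eta^{\frac32-r}$; all of this is sound for $r<\frac12$, and your order of choices ($\e$ first, then $\eta$, since $|c(0)|$ and $|c'(0)|$ degenerate as $\e\to 0$) is the right one. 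Your route buys an explicit rate $\e^{\,s-r}\|g_e\|_{H^s}$ where the paper uses a soft interpolation step; the paper's route buys freedom from any boundary bookkeeping. (Both versions tacitly assume $\|g-f\|_{H^s(\R^+)}>0$ when selecting the small parameters, which is harmless where the lemma is applied.)
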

\begin{proof}
Fix $0<s<\frac12$. We start with the following  \\
Claim. Fix $\psi \in H^s(\R)$ supported in $(-\infty,0]$. For any $\epsilon>0$, there is a function $\phi\in H^2(\R)$ supported in $(-\infty,0)$ such that $\|\phi-\psi\|_{H^r(\R)}<\epsilon$ for $r<s$.\\
To prove this claim first note that 
 $\chi_{(-\infty,-\delta)} \psi \to \psi $ 
in $L^2(\R)$ as $\delta \to 0^+$. Also note  by Lemma~\ref{lem:Hs0} that $\| \chi_{(-\infty,-\delta)} \psi\|_{H^s(\R)}\les \|   \psi\|_{H^s(\R)}$ uniformly in $\delta$. Therefore
 $\chi_{(-\infty,-\delta)} \psi \to \psi $ in $H^r$ for $r<s$ by interpolation. The claim follows by taking a smooth approximate identity $k_n$ supported in $(-\delta,\delta)$ for sufficiently small $\delta$, and letting
$\phi= [\chi_{(-\infty,-\delta)} \psi ] * k_n$ for sufficiently large $n$. 

To obtain the lemma from this claim, let $\widetilde f$ be an $H^2$ extension of $f$ to $\R$, and let $h$ be an $H^s$ extension of $g-f$ to $\R$ with $\|h\|_{H^s(\R)}\les \|g-f\|_{H^s(\R^+)}$. Apply the claim to $\psi=g_e-\widetilde f-h$ with $\epsilon=\|g-f\|_{H^s(\R^+)}$. Letting $f_e=\widetilde f +\phi$ yields the claim.  
\end{proof}
 
Finally, we prove that, for $s\in(0,1/2)$, the equation \eqref{nls} has at most one mild solution. To see this let $v =\lim v_n $  be a mild solution with initial and boundary data $g, h$. Let $g_e$ be an extension of $g$, and $u$ be the   solution we constructed. By the lemma above, we can extend $v_n(x,0)$ to $\R$ so that $v_n(\cdot,0)\in H^2(\R)$ and $v_n(\cdot,0)\to g_e$ in $H^r(\R)$, $r<s$. Let $u_n$ be the $H^2(\R)$ solution we constructed with initial data $v_n(\cdot,0)$ and boundary data 
$v_n(0,\cdot)$. By continuous dependence on initial data $u_n\to u$ in $H^r(\R)$,  and by  uniqueness in $H^2$ level, $u_n\big|_{\R^+}=v_n\big|_{\R^+}$. Therefore $u\big|_{\R^+}=v$.

\section{Proofs of Theorem~\ref{thm:smooth} and Theorem~\ref{thm:growth}} \label{sec:prt1t2}

\begin{proof}[Proof of Theorem~\ref{thm:smooth}]
Note that by \eqref{eq:gamma}, we have for $t\in [0,T]$
$$
u-W_0^t(g,h)=\eta(t) \int_0^tW_\R(t- t^\prime)  \eta(t^\prime /T) | u|^2  u\,d t^\prime - \eta(t) W_0^t (0,  q )(t),
$$
where
\begin{align*}
 q(t)=\eta(t ) D_0\Big(\int_0^tW_\R(t- t^\prime)  \eta(t^\prime /T) | u|^2  u \, d t^\prime \Big).
\end{align*}
Therefore, by the embedding  $X^{s,\frac12+}\subset C^0_tH^s_x$, the inequality \eqref{eq:xs2}, Lemma~\ref{lem:wbcont}, and Proposition~\ref{prop:duhamelkato}, we have
\begin{multline*}
\|u-W_0^t(g,h)\|_{C^0_{t\in [0,T]}H^{s+a}} \les \| \eta | u|^2  u \|_{X^{s+a,-\frac12+}}+  \|q\|_{H^{\frac{2(s+a)+1}{4}}_t}
 \\ \les  \| \eta | u|^2  u \|_{X^{s+a,-\frac12+}} + \left\{ \begin{array}{ll} \|\eta |u|^2 u\|_{X^{s+a,-\frac12+}} & \text{ for } 0\leq s+a \leq \frac12, \\
\|\eta |u|^2u\|_{X^{\frac12,\frac{2s+2a-3+}{4}}}+ \|\eta |u|^2 u\|_{X^{s+a,-\frac12+}}  & \text{ for }  \frac12 \leq s+a \leq \frac52. \end{array} \right.
\end{multline*}
Proposition~\ref{prop:smooth}, Proposition~\ref{prop:smooth2}, Theorem~\ref{thm:local}, and the local theory imply that
$$
\|u-W_0^t(g,h)\|_{C^0_{t\in [0,T]}H^{s+a}_{x\in\R^+}}\les \|u\|_{X^{s,b}}^3 \les (\|g\|_{H^s(\R^+)}+\|h\|_{H^{\frac{2s+1}{4}}(\R^+)})^3,
$$
which yields the claim.
\end{proof}

\begin{proof}[Proof of Theorem~\ref{thm:growth}]

First recall from Remark~\ref{rmk:Hslinbound} that $\|W_0^t(g,0)\|_{H^s(\R^+)}\les \|g\|_{H^s(\R^+)}$. Fix $T>0$.  Assuming the growth bound $\|u\|_{H^s(\R^+)}\leq f(T)$ where $f$ depends on $\|g\|_{H^s(\R^+)}$ and
$\|h\|_{ H^{s_1}(\R^+)}$, for some $s_1\geq \frac{2s+1}4$, let $\delta$ be the local existence time based on $f(T)$. Note that $\delta \approx (C+ f(T))^{-4} $, where $C=C(\|h\|_{H^{\frac{2s+1}{4}}(\R^+)})$. For $J\approx T/\delta$, we have
\begin{multline*}
\|u(J\delta)-W_0^{J\delta}(g,h)\|_{H^{s+a}(\R^+)} = \Big\| \sum_{k=1}^J W_{k\delta}^{J\delta} (u(k\delta),h)- W_{(k-1)\delta}^{J\delta} (u((k-1)\delta),h)\Big\|_{H^{s+a}(\R^+)}\\
\leq   \sum_{k=1}^J \big\|W_{k\delta}^{J\delta} (u(k\delta),h)- W_{(k-1)\delta}^{J\delta} (u((k-1)\delta),h)\big\|_{H^{s+a}(\R^+)} \\ \leq
 \sum_{k=1}^J \Big\|W_{k\delta}^{J\delta} \Big( \big[u(k\delta)-W_{(k-1)\delta}^{k\delta} (u((k-1)\delta),h)\big] ,0\Big)\Big\|_{H^{s+a}(\R^+)} \\
 \leq \sum_{k=1}^J \big\| u(k\delta)-W_{(k-1)\delta}^{k\delta} (u((k-1)\delta),h)  \big\|_{H^{s+a}(\R^+)} \les J f(t)^3\les \la T\ra f^7(T),
\end{multline*}
where the implicit constant depends only on $\|h\|_{H^{\frac{2s+1}{4}}(\R^+)}$. Here we used Remark~\ref{rmk:Hslinbound} in the second and third inequalities.

Recall that
$$
W_0^T(g,h)=W_\R(T) g_e +W_0^T(0,h-p),
$$
where $p(t)=\eta(t/\la T\ra) D_0(W_\R(t)g_e)$. Therefore, by Lemma~\ref{lem:Hs0}, we have
$$
\|W_0^T(g,h)\|_{H^s}\les \|g_e\|_{H^{s}} + \|(h-p)\
\chi_{(0,\infty)}\|_{H^{\frac{2s+1}4}} \les \|g\|_{H^{s}(\R^+)}+\|h\|_{H^{\frac{2s+1}4}(\R^+)} +\|p\|_{H^{\frac{2s+1}4}}.
$$
By applying Lemma~\ref{lem:kato} after we write $\eta(t/\la T\ra)=\sum_{k=1}^{\la T\ra}\eta_k(t)$, we obtain
$$
\|W_0^T(g,h)\|_{H^s(\R^+)}\les \la T\ra \|g\|_{H^{s}(\R^+)} + \|h\|_{H^{\frac{2s+1}4}(\R^+)}.
$$
Using this for $s+a$ and the previous calculation, we obtain
$$
\|u(T)\|_{H^{s+a}(\R^+)}\les \la T\ra \|g\|_{H^{s+a}(\R^+)} + \|h\|_{H^{\frac{2s+2a+1}4}(\R^+)}+\la T\ra f^7(T),
$$
where the implicit constant depends on $\|h\|_{H^{\frac{2s+1}{4}}(\R^+)}$.
In the defocusing case, for $s=s_1=1$, $f(t)\approx 1$, whereas in the focusing case $f$ grows exponentially. This implies that
in the defocusing case
$$
\|u(t)\|_{H^s(\R^+)}\les \left\{\begin{array}{ll}\la T\ra, & 1<s<\frac32,\\ \la T\ra^8, & \frac32<s<2,\\ \la T\ra^{57}, & 2<s<\frac52,\end{array}\right.
$$

\end{proof}

\section{Appendix}\label{sec:appendix}
We start with a slight improvement of the energy bound from \cite{bonaetal}.  We note that the Gagliardo-Nirenberg inequality in one dimension implies that
$$
\|f\|_{L^4(\R^+)}^4\leq C \|f\|_{L^2(\R^+)}^3 \|f_x\|_{L^2(\R^+)}.
$$
\begin{proposition}\label{prop:energy}
We have the following a priori estimates for the solutions of \eqref{nls}.
When $\lambda=-1$,
$$
\|u\|_{H^1}\leq C_{\|g\|_{H^1},\|h\|_{H^1}}.
$$
When $\lambda=1$,
$$
\|u\|_{H^1}\leq C e^{Dt},
$$
where $C=C(\|g\|_{H^1},\|h\|_{H^1}),$ and  $D=D(\|h\|_{H^1})$.
\end{proposition}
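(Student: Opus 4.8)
The plan is to run energy estimates directly on $\R^+$, carefully tracking the boundary contributions produced by the nonhomogeneous Dirichlet condition $u(0,t)=h(t)$. Since this is an a priori bound, I would first reduce to smooth, spatially decaying solutions (the general case following from the approximation scheme of Section~\ref{sec:prt3}), so that every integration by parts is justified. Multiplying \eqref{nls} by $\bar u$ and integrating over $x>0$ gives the mass balance $\frac{d}{dt}\|u\|_{L^2(\R^+)}^2 = 2\,\mathrm{Im}\big(\overline{h(t)}\,u_x(0,t)\big)$, while multiplying by $\bar u_t$ and taking real parts gives, with $E(t)=\frac12\|u_x\|_{L^2}^2-\frac{\lambda}{4}\|u\|_{L^4}^4$, the energy balance $\frac{d}{dt}E(t) = -\mathrm{Re}\big(u_x(0,t)\overline{h'(t)}\big)$, where I have used $u(0,t)=h(t)$ and $u_t(0,t)=h'(t)$.

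The main obstacle is that both balance laws involve the boundary flux $u_x(0,t)$, which is not controlled by the $H^1$ norm at a fixed time. To handle it I would derive a local--smoothing identity by multiplying \eqref{nls} by $\bar u_x$ and taking real parts, which produces $\tfrac12|u_x(0,t)|^2 = -\mathrm{Im}\int_0^\infty u_t\bar u_x\,dx-\frac{\lambda}{4}|h(t)|^4$. Integrating in time and integrating the space--time term by parts in $t$ recasts it in terms of the momentum $\int_0^\infty\mathrm{Im}(u\bar u_x)\,dx$ at the endpoints together with $\int_0^t\mathrm{Im}(h\overline{h'})\,d\tau$; since $H^1(\R^+)\hookrightarrow L^\infty$ in one dimension, this last integral and $\int_0^t|h|^4\,d\tau$ are bounded by $\|h\|_{H^1(\R^+)}$, while the momentum is bounded by $\|u\|_{L^2}\|u_x\|_{L^2}$. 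This yields $\int_0^t|u_x(0,\tau)|^2\,d\tau \les \sup_{[0,t]}\|u\|_{L^2}\|u_x\|_{L^2}+\|h\|_{H^1}^4+(\text{data})$, i.e.\ the flux is controlled by the $H^1$ norm to a power low enough to be absorbed.

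With the flux under control I would close the estimate. Feeding the flux bound into the energy balance through Cauchy--Schwarz in $\tau$ (using $\|h'\|_{L^2_t}\le\|h\|_{H^1}$) bounds $E(t)$, and the same input bounds the mass. The quartic term is treated by the Gagliardo--Nirenberg inequality $\|u\|_{L^4}^4\le C\|u\|_{L^2}^3\|u_x\|_{L^2}$ stated above. In the defocusing case $\lambda=-1$ the quartic term enters $E$ with the favorable sign $+\tfrac14\|u\|_{L^4}^4$, so $E$ directly dominates $\tfrac12\|u_x\|_{L^2}^2$; combined with the mass bound, Young's inequality lets one absorb the $H^1$ feedback and obtain a bound uniform in $t$, i.e.\ $C_{\|g\|_{H^1},\|h\|_{H^1}}$. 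In the focusing case $\lambda=1$ the quartic term has the wrong sign and must be absorbed via Gagliardo--Nirenberg and Young at the cost of a term $\sim\|u\|_{L^2}^6$ and a feedback linear in $\|u\|_{H^1}^2$; the resulting integral inequality has the form $N(t)\le A+B\int_0^tN(\tau)\,d\tau$ with $B$ depending only on $\|h\|_{H^1}$, so Gronwall gives the exponential bound $Ce^{Dt}$ with $D=D(\|h\|_{H^1})$. The genuinely delicate point throughout is the second step: producing an a priori, self-absorbing bound on the boundary flux $u_x(0,t)$, which is where the local--smoothing/momentum identity and the sign structure of the nonlinearity do the real work.
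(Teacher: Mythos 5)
Your architecture is the same as the paper's: the three multiplier identities (mass from $\bar u$, energy from $\bar u_t$, and the momentum/local-smoothing identity from $\bar u_x$), the Gagliardo--Nirenberg inequality for the quartic term, and above all the control of the boundary flux $I(t)=\int_0^t|u_x(0,\tau)|^2\,d\tau$ by $\|u\|_{L^2}\|u_x\|_{L^2}$ plus data via the momentum identity --- this last point, which you correctly single out as the crux, is exactly identity \eqref{eq:l} integrated in $x$ and then in $t$. The defocusing case then closes essentially as you describe.

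The one place where your sketch would not close as written is the final step in the focusing case. The inequality $N(t)\le A+B\int_0^t N(\tau)\,d\tau$ is not what these estimates produce: the flux bound controls $I$ by the momentum evaluated at the two time endpoints (a supremum, not a time integral), and, more seriously, the ``cost'' term $\|u\|_{L^2}^6$ is not harmless because mass is not conserved here --- one only has $\|u\|_{L^2}^2\les \|g\|_{L^2}^2+\|h\|_{L^2[0,t]}\,I^{1/2}$, so $\|u\|_{L^2}^6$ feeds back as $I^{3/2}$, i.e.\ superlinearly in $N=\|u\|_{H^1}^2$, and a linear Gronwall lemma does not apply. The paper's resolution is to substitute the mass bound and the $\|u_x\|_{L^2}$ bound into the flux self-estimate and observe that every dangerous product, e.g.\ $t^{1/4}I^{1/4}\cdot t^{3/4}I^{3/4}=tI$, is exactly linear in $I$ with a coefficient that is small once $t$ is small depending only on $\|h\|_{H^1}$; absorbing on such a time step yields a bound of the form $Q(t_0)\le C(Q(0)+1)$ for the combination $Q=\|u_x\|_{L^2}+\|u\|_{L^2}^3$, and iterating over steps of this fixed length is what produces $Ce^{Dt}$ with $D=D(\|h\|_{H^1})$. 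So your identities and your flux-control idea are exactly the paper's, but the closure requires this bookkeeping (the correct iterated quantity and the data-independent time step) rather than a literal linear Gronwall inequality.
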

\begin{proof}
We present the proof for the focusing case,
\begin{align}\label{focnls}
&iu_t+u_{xx}+ |u|^2u=0,\,\,\,\,x\in\R^+, t\in \R^+,\\
&u(x,0)=g(x), \,\,\,\,u(0,t)=h(t), \nn
\end{align}
the defocusing case is easier, see below. In what follows we drop $\R^+$ from $\|\cdot\|_{H^s(\R^+)}$ notation. 

The following identities can be justified by approximation by $H^2$ solutions:
\begin{align}
\label{eq:m} &\partial_t|u|^2= -2 \Im(u_x\overline{u})_x ,\\
\label{eq:e}&\partial_t(|u_x|^2-\frac12 |u|^4)=2\Re(u_x\overline{u_t})_x,\\
\label{eq:l}&\partial_x(|u_x|^2+\frac12 |u|^4)=-i\big[(u\overline{u_x})_t-(u\overline{u_t})_x\big].
\end{align}
We start by estimating $\|u\|_{L^2}$. By integrating \eqref{eq:m}   in $ [0,\infty)\times [0,t]$, we obtain
$$
\int_0^\infty |u(x,t)|^2 dx = \int_0^\infty |g(x )|^2 dx+2\Im \int_0^t u_x(0,s) \overline{h(s)} ds.
$$
By Cauchy-Schwarz inequality, we have
$$
\|u\|_{L^2_x}^2\leq \|g\|_{L^2}^2 + 2\|h\|_{L^2_{[0,t]}} \Big[\int_0^t |u_x(0,s)|^2ds\Big]^{1/2}.
$$
This implies by Sobolev embedding on $h$ that
\begin{align}\label{eq:ul21}
\|u\|_{L^2_x} \les \|g\|_{L^2} + t^{1/4} \Big[\int_0^t |u_x(0,s)|^2ds\Big]^{1/4},
\end{align}
where the implicit constant depends only on $\|h\|_{H^1}$.

To estimate $\|u_x\|_{L^2}$, we integrate \eqref{eq:e} in $ [0,\infty)\times [0,t]$:
\begin{multline*}
\int_0^\infty |u_x(x,t)|^2 dx = \int_0^\infty |g^\prime(x )|^2 dx \\ +\frac12\int_0^\infty |u(x,t)|^4 dx - \frac12\int_0^\infty |g(x)|^4 dx
-2\Re \int_0^t u_x(0,s) \overline{h^\prime(s)} ds.
\end{multline*}
Using Gagliardo-Nirenberg and Cauchy-Schwarz inequalities we have
$$
\|u_x\|_{L^2_x}^2 \leq \|g\|_{H^1}^2 + C \|u\|_{L^2_x}^3\|u_x\|_{L^2_x}+2\|h\|_{H^1} \Big[\int_0^t |u_x(0,s)|^2ds\Big]^{1/2}.
$$
Note that $y^2\leq A^2+By$ implies that $y\les A+B$. Using this and then   \eqref{eq:ul21}, we obtain
\begin{multline}\label{eq:uxl21}
\|u_x\|_{L^2_x} \les \|g\|_{H^1}  +  \|u\|_{L^2_x}^3 +  \Big[\int_0^t |u_x(0,s)|^2ds\Big]^{1/4}\\
\les  \|g\|_{H^1}  +  \|g\|_{L^2}^3 + t^{3/4} \Big[\int_0^t |u_x(0,s)|^2ds\Big]^{3/4} +  \Big[\int_0^t |u_x(0,s)|^2ds\Big]^{1/4}.
\end{multline}
Finally, to estimate $\int_0^t |u_x(0,s)|^2ds$, we  integrate \eqref{eq:l} in $x $ from 0 to $\infty$:
$$
|u_x(0,t)|^2+\frac12 |u(0,t)|^4=i \frac{d}{dt} \int_0^\infty u\overline{u_x} dx +i u(0,t)\overline{u_t(0,t)}.
$$
Integrating this in $[0,t]$ we have
\begin{multline*}
\int_0^t |u_x(0,s)|^2ds =-\frac12\int_0^t |h(s)|^4ds \\ + i \int_0^\infty u(x,t)\overline{u_x(x,t)} dx -  i \int_0^\infty g(x)\overline{g^\prime(x)} dx +i\int_0^t
h(s)\overline{h^\prime(s)} ds.
\end{multline*}
And hence
\begin{align}
\label{eq:ux0} I:=\int_0^t |u_x(0,s)|^2ds\leq \|u\|_{L^2_x}\|u_x\|_{L^2_x}+ \|g\|_{L^2}\|g\|_{H^1}+ \|h\|_{L^2}\|h\|_{H^1}.
\end{align}
Using \eqref{eq:ul21} and \eqref{eq:uxl21} in \eqref{eq:ux0} we have
\begin{multline*}
I \les tI + t^{1/4}I^{1/2}+  t^{1/4} I^{1/4} (\|g\|_{H^1} +   \|g\|_{L^2}^3) +\|g\|_{L^2}   ( t^{3/4} I^{3/4} +  I^{1/4})\\ + \|g\|_{L^2}^4+   \|g\|_{L^2}\|g\|_{H^1}+ 1
\end{multline*}
Taking $t$ small (depending only on $\|h\|_{H^1}$), we get
$$
I \les  I^{1/4} (\|g\|_{H^1} +   \|g\|_{L^2}^3) +   I^{3/4}\|g\|_{L^2}    + \|g\|_{L^2}^4+   \|g\|_{L^2}\|g\|_{H^1}+ 1.
$$
This implies that
$$
I \les \|g\|_{H^1}^{4/3} +  \|g\|_{L^2}^4    + 1.
$$
Using this in \eqref{eq:ul21} and \eqref{eq:uxl21}, we have 
$$
\|u\|_{L^2_x}\les   \|g\|_{L^2} +   \|g\|_{H^1}^{1/3} +1,
$$
$$
\|u_x\|_{L^2_x}  \les   \|g\|_{H^1} +   \|g\|_{L^2}^3 + 1.
$$
This implies that
$$
\|u_x\|_{L^2_x} +\|u\|_{L^2_x}^3\les   \|g\|_{H^1} + \|g\|_{L^2}^3+1.
$$
Iterating this bound implies that
$$
\|u_x\|_{L^2_x} +\|u\|_{L^2_x}^3\leq C  e^{Dt},
$$
where $D$ depends only on $\|h\|_{H^1}$.

In the defocusing case  we don't have the term coming from the Gagliardo Nirenberg inequality.  We instead have the following inequalities
$$
\|u\|_{L^2}\les 1+I^{1/4},
$$
$$
\|u_x\|_{L^2}\les 1+I^{1/4},
$$
$$
I\les 1+\|u\|_{H^1}^2,
$$
where the implicit constants depend on $\|g\|_{H^1}$ and $\|h\|_{H^1}$. Therefore
$$
\|u\|_{H^1}\les 1+ \|u\|_{H^1}^{1/2},
$$
which implies that $\|u\|_{H^1}$ remains bounded.
\end{proof}

Finally, we have the following lemmas. For a proof of the former, see \cite{et3}.
\begin{lemma}\label{lem:sums} If $\beta \geq \gamma \geq 0$ and $\beta + \gamma > 1$, then
\[ \int \frac{1}{\langle x-a_1 \rangle ^\beta \langle x - a_2 \rangle^\gamma} dx \lesssim \langle a_1-a_2 \rangle ^{-\gamma} \phi_\beta(a_1-a_2),  \]
where
\[ \phi_\beta(a) \sim \begin{cases} 1 & \beta > 1 \\  \log(1 + \langle a \rangle ) & \beta = 1 \\ \langle a \rangle ^{1-\beta} &\beta < 1 .\end{cases} \]
\end{lemma}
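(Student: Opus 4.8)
The plan is to prove this by a direct splitting of the real line according to proximity to the two centers $a_1$ and $a_2$. By translation invariance of the integral I may first set $a_1=0$, and after a reflection $x\mapsto -x$ assume $a_2=d\geq 0$, so that $d=|a_1-a_2|$ and $\langle a_1-a_2\rangle\sim\langle d\rangle$. When $d\lesssim 1$ the claim is immediate: both brackets are comparable to $\langle x\rangle$ for $|x|$ large, so the integrand is $\lesssim\langle x\rangle^{-\beta-\gamma}$, which is integrable precisely because $\beta+\gamma>1$; the integral is then $\lesssim 1$, matching the right-hand side since $\langle d\rangle^{-\gamma}\phi_\beta(d)\sim 1$ in this range. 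So the substance is the regime $d\gtrsim 1$, where $\langle d\rangle\sim d$.

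For $d\gtrsim 1$ I would split $\R$ into three pieces: the region $A=\{|x|\leq d/2\}$ near $a_1$, the region $B=\{|x-d|\leq d/2\}$ near $a_2$, and the complementary region $C$ where $x$ is far from both. On $A$ one has $\langle x-d\rangle\gtrsim d$, so the $\gamma$-factor pulls out the gain $d^{-\gamma}$ and leaves $\int_{|x|\leq d/2}\langle x\rangle^{-\beta}\,dx$; this last integral is exactly what produces $\phi_\beta(d)$, being $O(1)$, $O(\log d)$, or $O(d^{1-\beta})$ according to whether $\beta>1$, $\beta=1$, or $\beta<1$. On $B$ one has $\langle x\rangle\gtrsim d$, so the $\beta$-factor gives $d^{-\beta}$ and leaves $\int_{|y|\leq d/2}\langle y\rangle^{-\gamma}\,dy$; here I would use $\beta\geq\gamma$ to absorb the resulting power of $d$ into $d^{-\gamma}\phi_\beta(d)$ in each of the subcases $\gamma>1$, $\gamma=1$, $\gamma<1$. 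On $C$, for $x>3d/2$ one checks $\langle x-d\rangle\gtrsim\langle x\rangle$ and for $x<-d/2$ one has $\langle x-d\rangle\geq\langle x\rangle$, so in both tails the integrand is dominated by $\langle x\rangle^{-\beta-\gamma}$; integrating the tail from $|x|\sim d$ gives $\lesssim d^{1-\beta-\gamma}=d^{-\gamma}d^{1-\beta}$, which is again $\lesssim d^{-\gamma}\phi_\beta(d)$.

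The only real bookkeeping is matching the three values of $\phi_\beta$ across the three regions and the sub-cases for $\gamma$; once one observes that every contribution carries the factor $d^{-\gamma}$ times something no larger than $\max(1,\log d,d^{1-\beta})$, the bound drops out. I expect the mild obstacle to be purely organizational: verifying in region $B$ that the leftover power $d^{1-\gamma}$ (when $\gamma<1$), combined with the $d^{-\beta}$ from the far bracket, never exceeds $d^{-\gamma}\phi_\beta(d)$, which is exactly where the hypothesis $\beta\geq\gamma$ is used. No cancellation or delicate estimate is needed; convergence of the far-field integral is guaranteed by $\beta+\gamma>1$, and the shape of $\phi_\beta$ is entirely dictated by the single-bracket integral over the near region.
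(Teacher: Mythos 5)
Your argument is correct and complete: the three-region splitting (near $a_1$, near $a_2$, and the far field), together with the observations that the far bracket is bounded below by $\langle a_1-a_2\rangle$ on each near region and that $\beta\geq\gamma$ absorbs the leftover powers from the region near $a_2$, is exactly the standard proof of this convolution-type estimate. The paper itself does not prove the lemma but defers to the reference \cite{et3}, where the argument is the same elementary case analysis you outline, so there is nothing to compare beyond noting that your write-up supplies the details the paper omits.
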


\begin{lemma}\label{lem:sqrt} For fixed $\rho \in (\frac12, 1)$, we have
\[ \int \frac{1}{\langle x \rangle ^{ \rho} \sqrt{|x - a|}} dx \lesssim \frac{1}{\la a\ra^{ \rho-\frac12 }}. \]
\end{lemma}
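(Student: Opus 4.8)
The plan is to prove the bound by a direct three-region decomposition of the line, exploiting the two features of the integrand: the integrable square-root singularity at $x=a$ and the power decay $\la x\ra^{-\rho}$ at infinity. First I would reduce to $a\ge 0$, since the substitution $x\mapsto -x$ carries the integral for $a$ to the one for $-a$ and leaves the claimed bound $\la a\ra^{-(\rho-\frac12)}$ unchanged. I would then dispose of the range $0\le a\le 1$ immediately: there $\la a\ra\sim 1$, and the integral is a fixed finite constant because $\frac12<1$ makes the singularity at $x=a$ integrable while $\rho+\frac12>1$ makes the tail $\la x\ra^{-\rho-\frac12}$ integrable. So the content lies in the regime $a>1$, where $\la a\ra\sim a$.

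For $a>1$ I would split $\R$ into $A=\{|x-a|\le a/2\}$, $B=\{|x|\le 2a\}\setminus A$, and $C=\{|x|>2a\}$, which cover the line. On $A$ one has $|x|\ge a/2$, so $\la x\ra\sim a$, and the piece is bounded by $a^{-\rho}\int_{|x-a|\le a/2}|x-a|^{-1/2}\,dx\les a^{-\rho}\sqrt a=a^{\frac12-\rho}$. On $B$ one has $|x-a|>a/2$, hence $\sqrt{|x-a|}\gtrsim\sqrt a$, while $\int_{|x|\le 2a}\la x\ra^{-\rho}\,dx\les a^{1-\rho}$ precisely because $\rho<1$; this gives $\les a^{-1/2}\,a^{1-\rho}=a^{\frac12-\rho}$. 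On $C$ one has $\la x\ra\sim|x|$ and $|x-a|\ge|x|/2$, so the integrand is $\les\la x\ra^{-\rho-\frac12}$, whose integral over $\{|x|>2a\}$ is $\les a^{\frac12-\rho}$ precisely because $\rho+\frac12>1$. Summing the three contributions yields $\les a^{\frac12-\rho}\sim\la a\ra^{-(\rho-\frac12)}$, as claimed.

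There is no serious obstacle here; the estimate is a routine scaling and splitting argument, and one checks that $A\cup B\cup C=\R$ directly from the definitions. The only point worth flagging is that both endpoints of the hypothesis $\rho\in(\frac12,1)$ are genuinely used, and in different places: the lower bound $\rho>\frac12$ is exactly what makes the far-field piece $C$ converge with the correct power $a^{\frac12-\rho}$, while the upper bound $\rho<1$ is exactly what makes the bulk piece $B$ grow like $a^{1-\rho}$ rather than saturating at a constant. The near-singularity piece $A$, which already produces the sharp exponent, uses neither bound and relies only on $\la x\ra\sim\la a\ra$ there together with the integrability of the square-root singularity.
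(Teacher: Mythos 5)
Your argument is correct, but it takes a different route from the paper's. The paper splits at unit distance from the singularity, writing $\R=\{|x-a|\le 1\}\cup\{|x-a|>1\}$: on the near piece it uses $\la x\ra\sim\la a\ra$ and the integrability of $|x-a|^{-1/2}$ to get the (stronger) bound $\la a\ra^{-\rho}$, and on the far piece it replaces $\sqrt{|x-a|}$ by $\sqrt{\la x-a\ra}$ and invokes Lemma~\ref{lem:sums} with $\beta=\rho$, $\gamma=\frac12$, whose $\la a_1-a_2\ra^{-\gamma}\phi_\beta$ output delivers $\la a\ra^{\frac12-\rho}$ directly (this is where $\rho>\frac12$ and $\rho<1$ enter for the paper, via the hypotheses $\beta+\gamma>1$ and the case $\phi_\beta(a)\sim\la a\ra^{1-\beta}$). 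You instead split at scale $a$ into three regions and estimate each by hand, never using Lemma~\ref{lem:sums}. Both are complete: the paper's version is two lines given the convolution lemma, which it needs elsewhere anyway, while yours is self-contained and has the merit of isolating exactly where each endpoint of $\rho\in(\frac12,1)$ is used (the tail for $\rho>\frac12$, the bulk for $\rho<1$). Your region decomposition, the coverage check, and the exponent bookkeeping on each piece all check out.
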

\begin{proof} Let $A=\{x:|x-a|>1\}$, and $B=\{x:|x-a|\leq 1\}$.
Note that
$$
\int_B \frac{1}{\langle x \rangle ^{ \rho} \sqrt{|x - a|}} dx \les \frac1{\la a \ra^\rho} \int_B  \frac1{\sqrt{|x - a|}} dx \les \frac1{\la a \ra^\rho}.
$$
Finally, using Lemma~\ref{lem:sums}, we have
$$
 \int_A \frac1{\la x \ra^{ \rho} \sqrt{|x - a|}} dx \les  \int_A \frac1{\la x \ra^\rho \sqrt{\la x-a\ra}}   dx \les \frac1{\la a \ra^{\rho-\frac12}}.
$$
\end{proof}

\end{document}